\theoremstyle{plain}
\newtheorem{theorem}{Theorem}[section]
\newtheorem{proposition}[theorem]{Proposition}
\newtheorem{corollary}[theorem]{Corollary}
\newtheorem{lemma}[theorem]{Lemma}
\newtheorem{conjecture}[theorem]{Conjecture}
\theoremstyle{definition}
\newtheorem{definition}[theorem]{Definition}
\newcommand{\Hidden}[1]{}
\newcommand{\R}{\mathbb{R}}
\newcommand{\Z}{\mathbb{Z}}
\newcommand{\nlapl}{\mathcal{L}}
\newcommand{\nadj}{\mathcal{A}}
\renewcommand{\vec}{\mathbf}
\newcommand{\1}{\mathbb 1}
\DeclareMathOperator{\vol}{vol}
\DeclarePairedDelimiter{\floor}{\lfloor}{\rfloor}
\renewcommand{\d}{\partial}
\title{
    A Nordhaus-Gaddum type problem for the normalized Laplacian spectrum and
    graph Cheeger constant
}
\author{
    J. Nolan Faught\thanks{
        Department of Mathematics, Brigham Young University, Provo UT 84602, U.S.A. (\texttt{faught3@gmail.com}).
    },
    Mark Kempton\thanks{
        Department of Mathematics, Brigham Young University, Provo UT 84602, U.S.A. (\texttt{mkempton@mathematics.byu.edu}).
    }, and
    Adam Knudson\thanks{
        Department of Mathematics, Brigham Young University, Provo UT 84602, U.S.A. (\texttt{adamarstk@yahoo.com}).
    }
}
 \date{}
\begin{document}

% Begin front matter

\maketitle
%\pagebreak
\begin{abstract}
    For a graph $G$ on $n$ vertices with normalized Laplacian eigenvalues $0 =
    \lambda_1(G) \leq \lambda_2(G) \leq \cdots \leq \lambda_n(G)$ and graph complement
    $G^c$, we prove that
    \begin{equation*}
        \max\{\lambda_2(G),\lambda_2(G^c)\}\geq \frac{2}{n^2}.
    \end{equation*}
    We do this by way of lower bounding $\max\{i(G), i(G^c)\}$ and
    $\max\{h(G), h(G^c)\}$ where $i(G)$ and $h(G)$ denote the isoperimetric number and
    Cheeger constant of $G$, respectively.
\end{abstract}

\noindent {\bf Keywords:} Normalized Laplacian, Nordhaus-Gaddum problems, Laplacian spread,
algebraic connectivity, isoperimetric number, Cheeger constant.

\noindent {\bf AMS subject classification:} 05C50, 15A18.

% Begin main matter
\section{Introduction}
    The combinatorial Laplacian of an undirected graph $G$ is the matrix
    $L = D-A$, where $D$ is the diagonal matrix of vertex degrees and $A$ the
    adjacency matrix. We denote the eigenvalues of the combinatorial Laplacian
    $0 = \mu_1(G) \leq \mu_2(G) \leq \cdots \leq \mu_n(G)$.

    The combinatorial Laplacian has properties related to the connectivity,
    community structure, and heat flow of its graph. %Of interest are the largest eigenvalue and second-smallest eigenvalues $\mu_n(G)$ and $\mu_2(G)$.
    The second smallest eigenvalue is nonzero if and only if $G$ is connected and is
    non-decreasing under the operation of adding an edge; roughly speaking, a
    larger value of $\mu_2(G)$ corresponds to a well-connected graph. Due to
    these properties, $\mu_2(G)$ is commonly referred to as the algebraic
    connectivity of $G$. %The largest eigenvalue is of interest because it of its relationship to the algebraic connectivity of the graph complement $G^c$, $\mu_n(G) = n - \mu_2(G^c)$.

    The Laplacian spread conjecture, introduced in 2011 by Zhai et al.~\cite{zhai2011laplacian} (see also \cite{you2012laplacian}) and resolved recently in \cite{einollahzadeh2021lower}, states that for any simple undirected graph of order $n \geq 2$, \[\mu_n(G) - \mu_2(G) \leq n-1\] with equality if and only if $G$ or $G^c$ is isomorphic to the
    join of an isolated vertex with a disconnected graph. Since the introduction of the Laplacian spread conjecture, considerable work was done addressing specific families of graphs, and the conjecture was proven for graphs with diameter not equal to 3 \cite{zhai2011laplacian}, trees
    \cite{fan2008laplacian}, unicyclic graphs \cite{bao2009laplacian}, bicyclic
    graphs \cite{yi2010laplacian}, tricyclic graphs \cite{chen2009laplacian},
    cactus graphs \cite{liu2010laplacian}, quasi-tree graphs
    \cite{xu2011laplacian}, bipartite graphs \cite{ashraf2014nordhaus}, and
    $K_3$-free graphs \cite{chen2016some}. The Laplacian spread conjecture was finally recently resolved in full by Einollahzadeh and
    Karkhaneei in 2021 \cite{einollahzadeh2021lower}. 

    The approach of Einollahzadeh and Karkhaneei involves an equivalent formulation of the Laplacian spread conjecture relating to the complement of a graph.  The combinatorial Laplacian matrix satisfies a nice property with respect to taking graph complements.  If $G^c$ denotes the complement of the graph $G$, then it is not hard to see that $L(G^c) = nI - J - L(G)$ (where $J$ denotes the matrix whose entries are all 1)
    from which it follows that $\mu_i(G^c)=n-\mu_{n-i+2}(G)$ for $i=2,...,n$.  From this, it follows immediately that the Laplacian spread conjecture is equivalent to the statement
    \begin{equation}\label{eq:comb_lapl}
        \mu_2(G) + \mu_2(G^c) \geq 1.
    \end{equation}

    Recalling the connection between $\mu_2$ and the connectivity of the graph, this formulation of the Laplacian spread conjecture can thus be interpreted as roughly saying that a graph and its complement may not
    both be too poorly-connected. The formulation in (\ref{eq:comb_lapl}) casts the Laplacian spread conjecture as a kind of \emph{Nordhaus-Gaddum} type question. Nordhaus-Gaddum type inequalities typically give upper and lower bounds for
    the sum or product of a certain invariant of a graph and its complement. In
    1956, Nordhaus and Gaddum gave upper and lower bounds of this sort for the
    chromatic number of a graph in \cite{nordhaus1956complementary}. Since then,
    Nordhaus-Gaddum type inequalities have been given for many other graph
    invariants.  See \cite{aouchiche2013survey} for a survey of such types of results.  Along similar lines to the above, Afshari et al.~\cite{afshari2018algebraic} address a variation of the Nordhaus-Gaddum question for $\mu_2$, proving that 
    \[
    \max\{\mu_2(G),\mu_2(G^c)\}\geq \frac25
    \]
    and posing the question if this lower bound on the maximum can be improved from $2/5$ to $2-\sqrt2$.  See also \cite{barrett2022new} for further discussion of some of these questions.

    %Einollahzadeh and Karkhaneei were further able to characterize equality:$\mu_2(G) + \mu_2(G^c) = 1$  Returning to the relation between \(\mu_2(G^c)\) and \(\mu_n(G)\), this result may be stated  so the above result may be stated as ``''

    The purpose of this paper is to initiate the study of analogous Nordhaus-Gaddum type questions for the spectrum of the \emph{normalized Laplacian} matrix.  The normalized Laplacian $\nlapl(G) = D^{-1/2} L
    D^{-1/2}$ is a graph operator that shares many desirable spectral properties of
    the combinatorial Laplacian, while also relating closely to the transition matrix for a random walk on a graph \cite{chung1997spectral}. We denote the eigenvalues of the normalized
    Laplacian in non-decreasing order as $0 \leq \lambda_1(G) \leq \lambda_2(G)
    \leq \cdots \leq \lambda_n(G) \leq 2$. The eigenvalues of $\nlapl$ fall in
    the interval $[0, 2]$ due to the normalization. Like the
    combinatorial Laplacian, $\lambda_2(G) = 0$ if and only if $G$ is
    disconnected, and $\lambda_2(G)$ can be interpreted as giving a measure of how well-connected a graph is.  In addition, $\lambda_n(G)=2$ if and only if $G$ is bipartite. Thus it is immediate that the normalized Laplacian
    spread satisfies $\lambda_n(G) - \lambda_2(G) \leq 2$, with equality if and only if $G$ is disconnected with a bipartite component.  Thus the spread question was not particularly interesting for the normalized Laplacian (although putting conditions on $G$ such as requiring $G$ to be connected and/or non-bipartite could yield interesting questions). 
    
    However, the normalized Laplacian does not share the same nice relationship with the complement that we saw with the combinatorial Laplacian.  Thus the question of bounding $\lambda_2(G)+\lambda_2(G^c)$ is a distinct, and more interesting question, as is the question of bounding $\max\{\lambda_2(G),\lambda_2(G^c)\}$.  This is the primary focus of this paper.  
    
    Aksoy et al.~\cite{aksoy2018maximum} proved that if $G$ is a connected graph on $n$ vertices, then $\lambda_2(G)\geq(1+o(1))\frac{54}{n^3}$.  Thus it is immediate that $\max\{\lambda_2(G),\lambda_2(G^c)\}$ is at least on the order of $1/n^3$.  As we will explain in more detail below, numerical computations suggest the following stronger conjecture.

    \begin{conjecture}\label{conj:main_conjecture}
        Let $G$ be a graph on $n\geq2$ vertices and let $\nlapl(G)$ be its normalized Laplacian matrix with eigenvalues $0=\lambda_1(G)\leq\lambda_2(G)\leq\cdots\leq\lambda_n(G)$.  Then
        \[
        \max\{\lambda_2(G),\lambda_2(G^c)\}\geq \frac{2}{n-1}.
        \]
    \end{conjecture}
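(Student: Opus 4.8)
The plan is to route the problem through the graph Cheeger constant, as the abstract suggests, and only afterward fight for the sharp constant. Recall the Cheeger inequality for the normalized Laplacian, $\lambda_2(G) \geq \tfrac12 h(G)^2$, so it suffices to produce a Nordhaus--Gaddum lower bound for $\max\{h(G),h(G^c)\}$, and likewise for the isoperimetric number $i(G)$ after comparing $h$ and $i$ through the maximum degree. The combinatorial engine for all of this is the single pair of identities that for every vertex subset $S$,
\begin{equation*}
e_G(S,\bar S) + e_{G^c}(S,\bar S) = |S|\,(n-|S|) \qquad\text{and}\qquad \vol_G(S) + \vol_{G^c}(S) = (n-1)\,|S|,
\end{equation*}
which say that a cut that is sparse in $G$ is necessarily dense in $G^c$.

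First I would fix the cut $S$ that is optimal for $G$, normalized so that $\vol_G(S)\leq\vol_G(\bar S)$, and write $e_G(S,\bar S) = h(G)\,\vol_G(S)$. Feeding this same $S$ into $G^c$ and applying the identity bounds $e_{G^c}(S,\bar S)$ from below, which forces $h(G^c)$ to be large unless $S$ is nearly balanced and $h(G)$ is already bounded away from $0$. Carrying out the resulting case analysis on $|S|$, and on whether the optimal cuts of $G$ and of $G^c$ can be taken to coincide, should yield a bound of the shape $\max\{h(G),h(G^c)\}\geq c/n$, and hence, via Cheeger, $\max\{\lambda_2(G),\lambda_2(G^c)\}\geq c'/n^2$. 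This is the route that cleanly produces an inequality of order $1/n^2$.

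To reach the conjectured order $1/n$ with the correct constant, I would instead compare $\lambda_2$ directly with the combinatorial algebraic connectivity. Writing the normalized Rayleigh quotient as $\lambda_2(G)=\min\frac{\sum_{uv\in E(G)}(y_u-y_v)^2}{\sum_v d_v^G y_v^2}$ over $y$ with $\sum_v d_v^G y_v = 0$, and bounding $d_v^G\leq n-1$ in the denominator, suggests an inequality of the form $\lambda_2(G)\gtrsim \mu_2(G)/(n-1)$. One could then invoke the now-proven Laplacian spread inequality $\mu_2(G)+\mu_2(G^c)\geq 1$ to obtain $\max\{\lambda_2(G),\lambda_2(G^c)\}\gtrsim \tfrac{1}{2(n-1)}$, which already captures the conjectured order $1/n$ and leaves only the constant to be improved.

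The main obstacle is precisely that last comparison between the normalized and combinatorial second eigenvalues. The normalized problem constrains test vectors to the hyperplane $y\perp D_G\1$, whereas $\mu_2$ uses $y\perp\1$, and these hyperplanes differ between $G$ and $G^c$ because the degree sequences differ; a naive degree-comparison loses a factor that prevents the order $1/n$ from collapsing to the sharp constant $2$. Controlling this mismatch tightly enough to extract the exact constant, together with isolating and separately treating the near-extremal graphs that make the inequality tight (where the small-$n$ behavior is most delicate), is where I expect the genuine difficulty to lie.
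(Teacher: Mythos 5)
The statement you were asked to prove is Conjecture~\ref{conj:main_conjecture}, which the paper itself leaves open: the authors prove only the weaker bound $\max\{\lambda_2(G),\lambda_2(G^c)\}>2/n^2$ (Theorem~\ref{thm:main_thm}), exactly by your first route ($\max\{i(G),i(G^c)\}\geq1$, then $\max\{h(G),h(G^c)\}\geq1/\lfloor n/2\rfloor$, then Cheeger). So your first plan reproduces the paper's actual theorem, but it cannot reach the conjecture: since the extremal graphs have $h\approx 2/n$, the Cheeger inequality $\lambda_2>h^2/2$ caps this route at order $1/n^2$, and no sharpening of the Cheeger-constant bound can fix that quadratic loss.

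Your second route is where the real content would have to be, and you have correctly located the obstruction, though you are slightly too pessimistic about one piece and not pessimistic enough about another. The comparison $\lambda_2(G)\geq\mu_2(G)/d_{\max}(G)\geq\mu_2(G)/(n-1)$ does go through cleanly despite the hyperplane mismatch you worry about: the numerator $\sum_{uv\in E}(f_u-f_v)^2$ is invariant under shifting $f$ by a constant, so one may write $\lambda_2=\min_f \mathrm{num}(f)/\min_c\sum_v d_v(f_v-c)^2$ and bound $\min_c\sum_v d_v(f_v-c)^2\leq d_{\max}\min_c\sum_v(f_v-c)^2$. Combined with the Nordhaus--Gaddum bound $\max\{\mu_2(G),\mu_2(G^c)\}\geq 2/5$ of Afshari et al., this yields $\max\{\lambda_2(G),\lambda_2(G^c)\}\geq\frac{2}{5(n-1)}$, which would actually improve the paper's main theorem by an order of magnitude and is worth writing up carefully. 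However, it provably cannot give the conjectured constant: at the conjectured extremal graph (an isolated vertex joined to two cliques of order $(n-1)/2$) one has $\mu_2=1$, $d_{\max}=n-1$, and $\lambda_2=2/(n-1)$, so the comparison $\lambda_2\geq\mu_2/d_{\max}$ already loses a factor of $2$ there, on top of the fact that $\max\{\mu_2,\mu_2(G^c)\}\geq1$ is false in general (even the conjectured optimal constant is $2-\sqrt2$). So the gap is genuine: neither route proves the stated inequality, and no proof of it exists in the paper either.
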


    While we do not fully resolve this conjecture, the main result of this paper gets us within a factor of $1/n$ in order of magnitude:
    \begin{theorem}\label{thm:main_thm}
        Let $G$ be a graph on $n\geq2$ vertices and let $\nlapl(G)$ be its normalized Laplacian matrix with eigenvalues $0=\lambda_1(G)\leq\lambda_2(G)\leq\cdots\leq\lambda_n(G)$. Then
    \begin{equation}
        \max\{\lambda_2(G), \lambda_2(G^c)\}
        >\frac{2}{n^2}.
    \end{equation}
     \end{theorem}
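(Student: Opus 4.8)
The plan is to pass through the Cheeger constant and invoke the (normalized) Cheeger inequality $\lambda_2(G) \ge \tfrac12 h(G)^2$, where
\[
h(G) = \min_{\emptyset \ne S \subsetneq V} \frac{e_G(S,\bar S)}{\min\{\vol_G S,\ \vol_G \bar S\}}
\]
and $e_G(S,\bar S)$ denotes the number of edges of $G$ crossing the cut. Since squaring a bound $h > \tfrac 2n$ and halving yields exactly $\lambda_2 > \tfrac{2}{n^2}$, it suffices to prove the Nordhaus--Gaddum type inequality $\max\{h(G), h(G^c)\} > \tfrac 2n$. As a warm-up that already pins down the correct order of magnitude, I would first prove the cleaner statement $\max\{i(G), i(G^c)\} \ge c$ for an absolute constant $c$ (about $\tfrac12$), where $i$ is the isoperimetric number; the same mechanism drives both estimates, but the volume weighting in $h$ is what ultimately delivers the constant $2/n^2$.

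The engine is a double count performed on two sparsest cuts at once. Let $S$ realize $h(G)$, normalized so that $\vol_G S \le \vol_G \bar S$, and let $T$ realize $h(G^c)$. Together these cuts split $V(G)$ into four cells $A = S\cap T$, $B = S\setminus T$, $C = T\setminus S$, $D = V\setminus(S\cup T)$, of sizes $a,b,c,d$. The key observation is that a pair $\{u,v\}$ with $u\in A,\ v\in D$ straddles $S$ (as $u\in S,\ v\notin S$) and straddles $T$ (as $u\in T,\ v\notin T$); since $\{u,v\}$ is an edge of exactly one of $G,G^c$, it is counted in exactly one of $e_G(S,\bar S)$, $e_{G^c}(T,\bar T)$. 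Summing over all such pairs gives $ad \le e_G(S,\bar S) + e_{G^c}(T,\bar T)$, and the symmetric argument applied to cells $B$ and $C$ gives $bc \le e_G(S,\bar S) + e_{G^c}(T,\bar T)$. Writing $e_G(S,\bar S) = h(G)\vol_G S$ and $e_{G^c}(T,\bar T) = h(G^c)\vol_{G^c} T$ and setting $H = \max\{h(G), h(G^c)\}$, I obtain $\max\{ad, bc\} \le H\bigl(\vol_G S + \vol_{G^c} T\bigr)$.

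To finish I must show that $\max\{ad,bc\}$ is large relative to $\vol_G S + \vol_{G^c} T$: the target is $\max\{ad,bc\} \ge \tfrac 2n\bigl(\vol_G S + \vol_{G^c}T\bigr)$, which rearranges to $H \ge \tfrac 2n$. Since $a+b+c+d = n$ and $a+b$, $a+c$ are the sizes of the chosen sides, the four cell sizes cannot all be small, and optimizing $\max\{ad,bc\}$ against these constraints should force it to grow at the needed rate. I expect this final step to be the main obstacle: the double count controls the \emph{vertex counts} $a,b,c,d$, whereas $h$ is weighted by \emph{volumes}, so the crude estimate $\vol_G S \le (n-1)|S|$ loses a constant factor and only yields $H \gtrsim \tfrac{1}{n}$. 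Recovering the sharp constant will require using that $S$ and $T$ are the smaller-volume sides (so each volume is at most half the total) and splitting into cases by whether the sparsest cuts are balanced or lopsided in vertex count; in the lopsided case (e.g. a near-singleton $S$, where the volume bound is most wasteful) I would instead argue that such a cut forces $\lambda_2$ to be large on its own. Finally, the degenerate situation in which $G$ or $G^c$ is disconnected makes one of $\lambda_2(G), \lambda_2(G^c)$ vanish, so the whole bound must be carried by the connected complement; I would handle this separately and track constants carefully to secure the strict inequality in the theorem.
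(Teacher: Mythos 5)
Your high-level architecture matches the paper's: pass to the Cheeger constant via $\lambda_2 > h^2/2$ and prove a Nordhaus--Gaddum bound for $h$ by a double count over the four cells cut out by a sparse cut of $G$ and a sparse cut of $G^c$. Your central inequality is literally the paper's: the pairs between $A=S\cap T$ and $D=\bar S\cap\bar T$, and between $B$ and $C$, each lie in exactly one of $G,G^c$ and cross the corresponding cut, giving $ad+bc\le e_G(S,\bar S)+e_{G^c}(T,\bar T)$ (you only state the max, but the sum holds since the two families of pairs are disjoint, and the sum is what the paper uses). The genuine gap is the step you yourself flag as ``the main obstacle'': converting a lower bound on the vertex-count products $ad,bc$ into the volume-normalized bound $H\ge 2/n$. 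That conversion is where essentially all of the work lives, and your proposal does not carry it out --- as you note, the crude estimate $\vol_G S\le (n-1)|S|$ only yields $H\gtrsim 1/n$, and your proposed repair (case-split on balanced versus lopsided cuts, with a separate argument for near-singleton cuts) is left unexecuted. There is also a subtlety you have not addressed: $S$ is chosen to have the smaller \emph{volume}, so it may still contain more than $n/2$ vertices, and the constraints you want on $a+b=|S|$ and $a+c=|T|$ are not automatic.

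The paper resolves this by decoupling the two difficulties. It first proves the purely combinatorial statement $\max\{i(G),i(G^c)\}\ge 1$ (Theorem \ref{thm:maxisoperimetric}) using exactly your double count on the \emph{vertex-count-normalized} optimal sets, followed by a case analysis on whether any of $b,c,d$ equals $0$ or $1$, hand-checks for small $n$, and an appeal to Lemma \ref{lem:disconnected} when $|X|=1$; note the exceptional graph $P_4$, for which $i(P_4)=i(P_4^c)=\tfrac12$ --- your warm-up constant $c\approx\tfrac12$ would absorb this exception but would cost a factor of $4$ in the final eigenvalue bound, whereas the paper excludes $P_4$ and checks $\lambda_2(P_4)=\tfrac12$ directly. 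It then transfers the isoperimetric bound to the Cheeger constant with a separate lemma (Lemma \ref{lem:isogivescheeger}): if $h_G(X)<1/\lfloor n/2\rfloor$ then $i_G(X)<1$, proved from $\vol_G(X)\le |X|\,i_G(X)+|X|(|X|-1)$ together with the bound $\min\{|X|,|\bar X|\}\le n/2$, with a second case handling the situation where the volume-smaller side is the vertex-larger side. This transfer lemma is precisely the refinement of the volume bound your sketch is missing; until you either supply an argument of this kind or complete your direct case analysis on the Cheeger cuts, the proof is incomplete at its decisive step.
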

    
    %Inequalities of this form were also studied for the combinatorial Laplacian    in \cite{afshari2018algebraic}. The study of the Laplacian spread in    \cite{barrett2022new} also provides a conjecture which if true would confirm    a conjecture on the quantity $\max\{\mu_2(G), \mu_2(G^c)\}$ given in    \cite{afshari2018algebraic}.
    We will get to our main result indirectly, first studying the isoperimetric number $i(G)$ and the Cheeger
    constant $h(G)$ of a graph. These are graph invariants inspired by differential
    geometry. Intuitively they measure the smallest ratio of perimeter divided
    by area an object could have. In graphs they correspond to partitioning the
    vertices of a graph into two sets with few edges going across parts. The well-known Cheeger inequality for graphs (described below) gives a relationship between the Cheeger constant of $G$ and $\lambda_2(G)$
    (see \cite{chung1997spectral}).  This will be the primary tool for our main result.
    %\textcolor{red}{Some history of Cheeger/Isoperimetric is given in \cite{chung1997spectral}. Consider supplementing or changing this paragraph. Also, how much Cheeger meaning discussed in intro vs in Cheeger section?}
    %are related to (THINGS FROM DIFFERENTIAL GEOMETRY I THINK. \cite{chung1997spectral}
    % talks about some history here, consider reading through for ideas). They correspond
    % roughly to finding (SOME SORT OF optimization of edge cut to ) 

    The remainder of the paper is organized as follows. In Section
    \ref{sec:Isoperimetric} we will provide a lower bound for $\max\{i(G),
    i(G^c)\}$. Section \ref{sec:Cheeger} will provide a lower bound for
    $\max\{h(G), h(G^c)\}$ based on the lower bound involving $i(G)$. Using this, Section \ref{sec:Eigenvalue} will give our main lower
    bound for $\max\{\lambda_2(G), \lambda_2(G^c)\}$, and we will discuss Conjecture \ref{conj:main_conjecture}. We will give partial characterizations for where the isoperimetric number and
    Cheeger constant bounds are attained in  Section \ref{sec:Characterization}. 
     We then end with a conjecture and discuss some open questions.

\section{Isoperimetric and Cheeger Constants}
\subsection{Graph Isoperimetric Number}\label{sec:Isoperimetric}
    \begin{definition}
        For a graph $G = (V, E)$ and subsets $X,Y\subseteq V$, let $e_G(X, Y)$ denote the set of edges having one vertex in $X$ and the other in $Y$. The boundary of $X \subset V$ is the set
        $\partial_G(X) = e_G(X, \bar X)$ where $\bar X = V \setminus X$. The \emph{isoperimetric
        number} of such a set is defined as
        \begin{equation*}
            i_G(X)
            = \frac{|\partial_G(X)|}{\min\{|X|, |\bar X|\}}.
        \end{equation*}
        The isoperimetric number of a graph is the minimum value that $i_G(X)$
        attains over all subsets of $V$
        \begin{equation*}
            i(G)
            = \min_{X \subset V} \frac{|\partial_G(X)|}{\min\{|X|, |\bar X|\}}.
        \end{equation*}
    \end{definition}
    Observe that $i_G(X) = i_G(\bar X)$, so it may at times be simpler to consider $i(G)$ as
    the minimum value that $i_G(X)$ attains over all subsets of $V$ subject to
    $|X| \leq n/2$, where $n$ is the order of $G$.  In Figure \ref{fig:isoPlots1} we plot the point $(i(G),i(G^c)$ for all graphs $G$ on $n=4,5,6,7,8,9$ vertices.  As seen in the figure, with only one exception, it appears that one or the other of these are always at least 1, and for many graphs, the point lies on the square defined by $x=1, y=1$.  This is the first main result of the section.  We begin by considering the case of a disconnected graph.

\begin{figure}[h]
        \centering
        \includegraphics[scale=0.75]{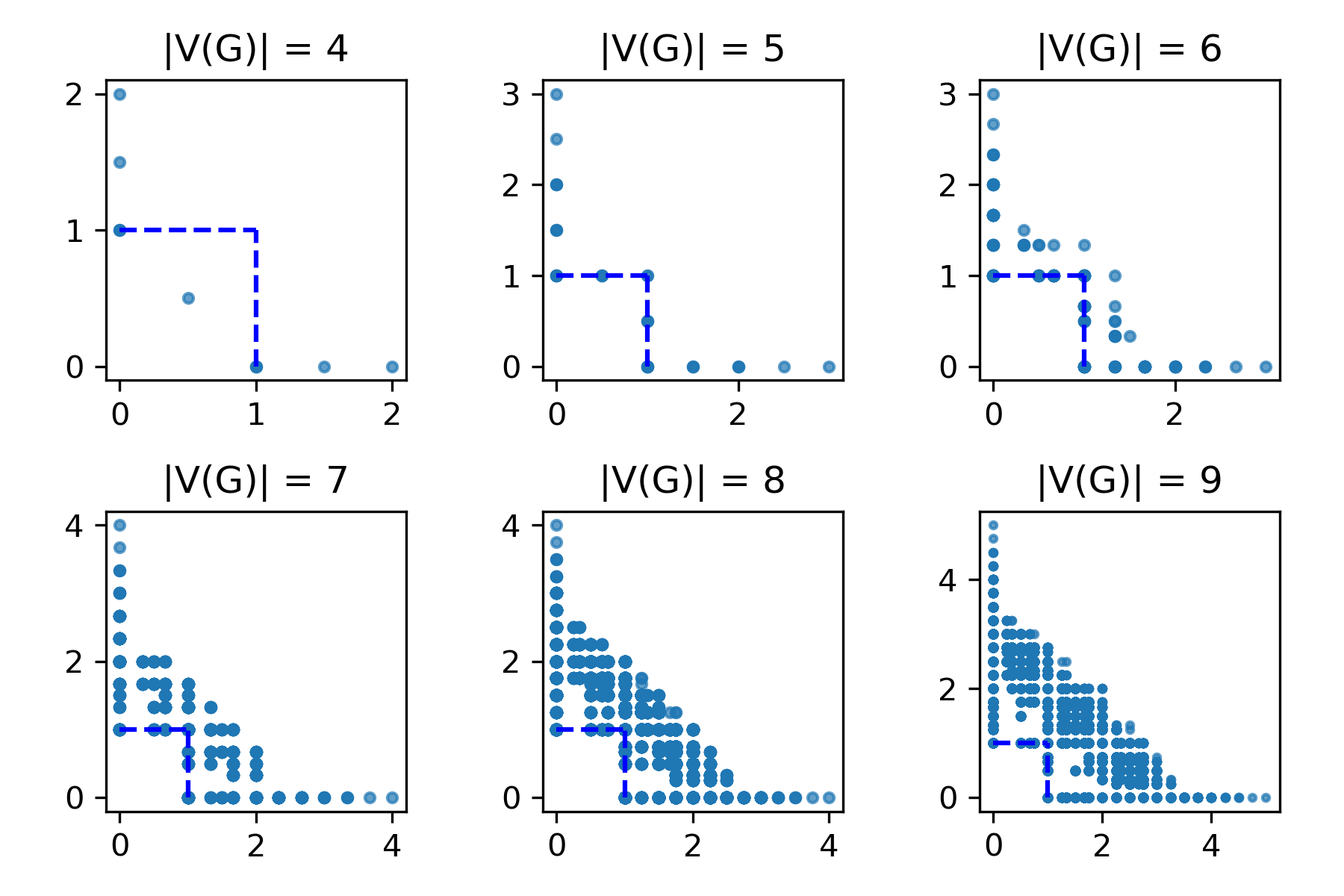}
        \caption{Plots of $(i(G), i(G^c))$ for all graphs of given order. The lines trace out the box $x = 1$ and $y=1$.}
        \label{fig:isoPlots1}
    \end{figure}

    \begin{lemma}\label{lem:disconnected}
        If $G$ is disconnected, $i(G^c)\geq1$.
    \end{lemma}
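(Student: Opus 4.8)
The plan is to prove the stronger pointwise statement that $|\partial_{G^c}(X)| \ge \min\{|X|,|\bar X|\}$ holds for every proper nonempty $X \subset V$, which immediately gives $i_{G^c}(X)\ge 1$ for all $X$ and hence $i(G^c)\ge 1$. Using the symmetry $i_{G^c}(X)=i_{G^c}(\bar X)$, I may assume $|X|\le|\bar X|$, so the goal reduces to showing $|\partial_{G^c}(X)|\ge |X|$.

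The structural fact I would exploit is that if $u$ and $v$ lie in different connected components of $G$, then $uv\notin E(G)$ and hence $uv\in E(G^c)$. Writing the components of $G$ as $C_1,\dots,C_k$ (with $k\ge 2$ since $G$ is disconnected) and setting $x_i=|X\cap C_i|$, $\bar x_i=|\bar X\cap C_i|$, $x=|X|$, $\bar x=|\bar X|$, every pair made of a vertex of $X$ in $C_i$ and a vertex of $\bar X$ in $C_j$ with $i\ne j$ contributes a distinct edge to $\partial_{G^c}(X)$. This yields
\begin{equation*}
  |\partial_{G^c}(X)| \ \ge\ \sum_{i\ne j} x_i\bar x_j \ =\ x\bar x-\sum_i x_i\bar x_i \ =\ \sum_i x_i(\bar x-\bar x_i).
\end{equation*}
Equivalently, $G^c$ contains the complete multipartite graph on the parts $C_1,\dots,C_k$ as a spanning subgraph and the right-hand side is exactly the size of the corresponding cut; since adding edges never decreases any boundary, this is a legitimate lower bound, and it reduces the lemma to a statement about complete multipartite graphs.

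It then remains to show $\sum_i x_i(\bar x-\bar x_i)\ge x$, and here I would split into two cases depending on whether $\bar X$ is spread over more than one component. If no single component contains all of $\bar X$, then $\bar x_i\le \bar x-1$ for every $i$, so $\bar x-\bar x_i\ge 1$ and summing $x_i(\bar x-\bar x_i)\ge x_i$ over the indices with $x_i>0$ gives exactly $\sum_i x_i = x$. If instead all of $\bar X$ sits inside a single component $C_{i_0}$, then $\bar x_{i_0}=\bar x$ and $\bar x_j=0$ for $j\ne i_0$, so the sum collapses to $\bar x\sum_{j\ne i_0}x_j=\bar x\sum_{j\ne i_0}|C_j|$.

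The one delicate point — and the only place where disconnectedness is genuinely used — is this second case: when $\bar X$ is confined to one component the naive bound $\bar x-\bar x_i\ge 1$ fails, so instead I would observe that $X$ must then engulf every other component entirely. Because $G$ has at least two components, $\sum_{j\ne i_0}|C_j|\ge 1$, whence the sum is at least $\bar x\ge x$, using the reduction $|X|\le|\bar X|$. Everything else is routine bookkeeping, so the main obstacle is simply recognizing and correctly handling this degenerate configuration.
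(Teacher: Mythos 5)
Your proof is correct and follows essentially the same route as the paper's: both lower-bound $|\partial_{G^c}(X)|$ by the cut of the complete multipartite graph on the components of $G$ (the paper does this with two parts, you with all $k$), and both isolate the same degenerate case in which one side of the partition swallows every component but one. Your handling of that case via $X \supseteq C_j$ for all $j \neq i_0$ and $|X| \leq |\bar X|$ matches the paper's computation showing $(n-|X|)|X_2| \geq |X|$.
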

    \begin{proof}
        Let $C_1,C_2$ be two components of $G$, that is $C_1\cup C_2 = V(G)$, $C_1\cap C_2 = \emptyset$, and $e_G(C_1, C_2) = \emptyset$. Let $X\subset V(G)$ with
        $|X| \leq \frac{n}{2}$. Let $X_1=X\cap C_1$ and $X_2=X\cap C_2$.  Note
        that $G$ contains no edge from $C_1$ to $C_2$, so $G^c$ contains every
        such edge. In particular $\partial_{G^c}(X)$ contains every possible
        edge from $X_1$ to $C_2$ and from $X_2$ to $C_1$.  Thus
        \begin{equation*}
            |\partial_{G^c}(X)| \geq |X_1|(|C_2|-|X_2|) + |X_2|(|C_1| - |X_1|).
        \end{equation*}
        If neither $|X_2| = |C_2|$ nor $|X_1| = |C_1|$ then we get
        \begin{equation*}
            |\partial_{G^c}(X)| \geq |X_1| + |X_2| = |X|.
        \end{equation*}
        Since $|X|$ was arbitrary, this implies that $i(G^c) \geq 1$.

        Suppose now that $|X_2| = |C_2|$. The case $|X_1| = |C_1|$ is similar. Then
        the inequality above becomes
        \begin{equation*}
            |\partial_{G^c}(X)| \geq |X_2||C_1| - |X_1||X_2|.
        \end{equation*}
        We show that $|X_2||C_1| -|X_1||X_2|\geq |X|$. Notice that $|C_1| = n - |X_2|$
        and $|X_1| = |X| - |X_2|$. Then we can show equivalently that $(n-|X|)|X_2| -
        |X| \geq 0$. This is strictly increasing in $|X_2|$ and nonnegative when $|X_2|
        = 1$ so this inequality is true. Thus $|\partial_{G^c}(X)| \geq |X|$ and so
        $i(G^c)\geq 1$.
    \end{proof}

    Next we have the main result of this section. This result is the foundation for the
    main result of Section \ref{sec:Cheeger}, which is in turn the foundation for the
    main result of Section \ref{sec:Eigenvalue}.

    \begin{theorem}\label{thm:maxisoperimetric}
        Let $G$ be any graph with $n \geq 2$ vertices other than the path on 4 vertices.  Then
        \begin{equation*}
            \max\{i(G), i(G^c)\} \geq 1.
        \end{equation*}
    \end{theorem}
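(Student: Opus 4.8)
The plan is to first dispatch the disconnected case and then reduce everything to a single inequality describing how two small cuts, one in $G$ and one in $G^c$, must interact. If $G$ is disconnected, Lemma~\ref{lem:disconnected} gives $i(G^c) \ge 1$; since $(G^c)^c = G$, the same lemma applied to $G^c$ gives $i(G) \ge 1$ whenever $G^c$ is disconnected. So I may assume both $G$ and $G^c$ are connected and argue by contradiction: suppose $i(G) < 1$ and $i(G^c) < 1$. As the isoperimetric ratio is a ratio of integers, this produces a set $A$ with $a := |A| \le n/2$ and $|\partial_G(A)| \le a - 1$, and a set $B$ with $b := |B| \le n/2$ and $|\partial_{G^c}(B)| \le b - 1$. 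Connectivity forces $a, b \ge 2$, since a single vertex with no outgoing edge would be isolated.

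Next I would compare $A$ and $B$ through the four cells $P = A\cap B$, $Q = A\setminus B$, $R = B\setminus A$, $S = V\setminus(A\cup B)$, of sizes $p,q,r,s$, so that $a = p+q$ and $b = p+r$. The crucial observation is that a set with few $G$-edges on its boundary has nearly complete bipartite structure across the same partition in $G^c$: every pair crossing between $A$ and $\bar A$ that fails to be a $G^c$-edge is a $G$-edge of $\partial_G(A)$. Counting only those edges of $\partial_{G^c}(B)$ that also cross between $A$ and $\bar A$ (namely the $(P,S)$ and $(Q,R)$ pairs) yields
\begin{equation*}
    b - 1 \ge |\partial_{G^c}(B)| \ge (ps + qr) - |\partial_G(A)| \ge ps + qr - (a-1),
\end{equation*}
which rearranges to the key inequality
\begin{equation*}
    ps + qr \le a + b - 2 = 2p + q + r - 2 .
\end{equation*}
Completing the square, this is equivalent to $(p-1)^2 + (q-1)(r-1) + p(s-p) \le 0$. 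Adding the two size constraints $a,b \le n/2$ gives $p \le s$, so both $(p-1)^2$ and $p(s-p)$ are nonnegative.

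From here I would split on the sign of $(q-1)(r-1)$. If $(q-1)(r-1) \ge 0$, then all three nonnegative terms must vanish, forcing $p = s = 1$ and $q = 1$ or $r = 1$; feeding this back into $2 \le a,b \le n/2$ pins down $n = 4$ with $p=q=r=s=1$, and a short finite check over the four labeled vertices shows that the only graph consistent with $|\partial_G(A)| = |\partial_{G^c}(B)| = 1$ is the path $P_4$. If instead $(q-1)(r-1) < 0$, then one of $q,r$ equals $0$ and the other is at least $2$; say $q = 0$, i.e.\ $A \subseteq B$. Here the bound above looks too weak, but the constraint $b \le n/2$ now forces $s \ge p + r$, hence $p(s-p) \ge pr$, and substituting into the key inequality gives $(p-1)^2 + pr \le r - 1$, which is impossible since its left side exceeds its right. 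The case $r = 0$ is symmetric. Thus the assumption $i(G) < 1$ and $i(G^c) < 1$ survives only when $G = P_4$, which proves the theorem.

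I expect the main obstacle to be exactly the degenerate cases $q = 0$ and $r = 0$, where one cut is contained in the other: there the naive almost-complete-bipartite count does not by itself beat $b-1$, and one must bring in the balance conditions $a,b \le n/2$ to close the gap. A second delicate point is verifying that the unique boundary configuration is genuinely $P_4$ and not some other four-vertex graph, which is precisely where the $P_4$ exception in the statement originates.
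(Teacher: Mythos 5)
Your argument is correct, and its core is the same as the paper's: you partition $V$ into the four cells determined by the two cuts and observe that every $(P,S)$ and every $(Q,R)$ pair must appear in $\partial_G(A)$ or in $\partial_{G^c}(B)$, which is precisely the paper's inequality $|\partial_G(X)|+|\partial_{G^c}(Y)|\geq ad+bc$ in its notation $a=|X\cap Y|$, $b=|X\cap Y^c|$, $c=|X^c\cap Y|$, $d=|X^c\cap Y^c|$. Where you genuinely diverge is in closing out the degenerate cases. The paper notes that $2a+b+c>ad+bc$ is already contradictory when $b,c,d\geq 2$, and then runs separate subcases for $d\in\{0,1\}$ and $b,c\in\{0,1\}$, several of which are settled by an appeal to checking all graphs on up to $7$ vertices by hand, with the $|X|=1$ subcases handed to Lemma~\ref{lem:disconnected}. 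You instead use integrality on each cut separately to sharpen the bound to $ps+qr\leq a+b-2$, rewrite it as $(p-1)^2+(q-1)(r-1)+p(s-p)\leq 0$, and deploy the balance constraints ($p\leq s$ from $a+b\leq n$, and $s\geq p+r$ when $q=0$) to eliminate every configuration algebraically except $n=4$ with $p=q=r=s=1$, where a check on four labeled vertices isolates $P_4$. This buys a fully self-contained proof whose only finite verification is on $4$ vertices, and it makes transparent why $P_4$ is the unique exception; the paper's route avoids the algebraic identity at the cost of those unspelled-out small-graph checks. The one step to make explicit when writing yours up is that connectivity forces $a,b\geq 2$ (it does: $a=1$ together with $|\partial_G(A)|\leq 0$ would isolate a vertex), since $p\geq 2$ is exactly what makes the $q=0$ subcase collapse.
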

    \begin{proof}
        Suppose that both $i(G)$ and $i(G^c)$ are strictly less than 1.  Then
        there exist $X, Y\subset V(G)$, $|X| \leq n/2$ and $|Y| \leq n/2$,
        such that
        \begin{equation*}
            |\partial_G(X)| < |X| \text{ and }
            |\partial_{G^c}(Y)| < |Y|.
        \end{equation*}

        We first claim that
        \begin{equation}\label{eq:sum}
            |\partial_G(X)| + |\partial_{G^c}(Y)|
            \geq |X \cap Y| |X^c \cap Y^c| + |X \cap Y^c||X^c \cap Y|.
        \end{equation}
        This follows because any possible edge between $X\cap Y^c$ and $Y\cap X^c$ is
        present in either $\partial_G(X)$ or in $\partial_{G^c}(Y)$ and there
        are $|X\cap Y^c| |X^c\cap Y|$ total possible such egdes, and similarly,
        any possible edge between $X\cap Y$ and $X^c\cap Y^c$ is present in
        either $\partial_G(X)$ or $\partial_{G^c}(Y)$, and there are
        $|X \cap Y||X^c \cap Y^c|$ such possible edges.

        Let us write $a = |X \cap Y|$, $b = |X \cap Y^c|$, $c = |X^c \cap Y|$,
        $d=|X^c\cap Y^c|$.  Thus equation (\ref{eq:sum}) becomes
        \begin{equation}\label{eq:sum2}
            |\partial_G(X)| + |\partial_{G^c}(Y)| \geq ad + bc.
        \end{equation}
        However, note that $|X|=a+b$ and $|Y|=a+c$, so by our assumption at the
        beginning, we have 
        \begin{equation}\label{eq:upper_bound}
            |\partial_G(X)| + |\partial_{G^c}(Y)| < |X|+|Y| = 2a + b + c.
        \end{equation}
        Combining (\ref{eq:sum2}) and (\ref{eq:upper_bound}) we have
        \begin{equation}\label{eq:contradiction}
            2a+b+c > ad+bc.
        \end{equation}
        This is a contradiction if each of $b,c,d\geq2$.  We will rule out the
        possibilities when any one of these is less than 2.

        First, assume $d=0$.   Then $X^c\cap Y^c$ is empty, which implies that
        $X \cup Y$ is all of $V(G)$.  Since both $X$ and $Y$ have at most half
        the vertices, this implies that $b = |X| = n/2, c=|Y|=n/2$ and $a=|X\cap Y|=0$.  So
        (\ref{eq:contradiction}) becomes $n>n^2/4$ which is a contradiction for
        $n\geq4$.  Graphs on $2$ or $3$ vertices can be checked by hand, so we
        may rule out any case with $d=0$.

        Now assume $d=1$.  This implies $X\cup Y$ includes all vertices but one.
        So either $|X|=|Y|=n/2$ and $|X\cap Y|=1$, or we have, without loss of
        generality, $|X|=n/2$ and $|Y|=(n/2)-1$ and $X\cap Y$ is empty.  Both
        possibilities lead to contradictions very similar to the one in the
        $d=0$ case, needing to check all graphs on $6$ or fewer vertices by
        hand.  

        So we may assume $d\geq2$ and need only check when one of $b$ or $c$ is
        less than $2$.  We can treat $b$ and $c$ symmetrically.

        Assume $b = |X \cap Y^c| = 0$.  This implies $X \subseteq Y$.  Then
        $a = |X|$, $a+c=|Y|$, $d=|Y^c|$, so (\ref{eq:contradiction}) becomes
        \begin{equation*}
            |X|+|Y| > |X||Y^c|.
        \end{equation*}
        But $|Y|\leq n/2$ implies $|Y^c|\geq n/2$ so this implies 
        \begin{equation*}
        |X|+\frac{n}{2} > |X|\frac{n}{2}
        \end{equation*}
        which is a contradiction (for $n$ at least $4$) unless $|X| = 1$.  But
        if $|X| = 1$, then $|\partial_G(X)|<|X|$ implies $G$ is disconnected.
        Then $i(G^c)\geq1$ by Lemma \ref{lem:disconnected}.

        Now assume $b=|X\cap Y^c|=1$.  So $X$ has exactly 1 vertex outside of
        $Y$.  Thus $a = |X| - 1$, $d = |Y^c| - 1$, thus (\ref{eq:contradiction})
        becomes
        \begin{equation*}
            2(|X| - 1) + 1 + c
            > (|X| - 1)(|Y^c| - 1) + c %|X||Y^c|-|X|-|Y^c|+1
        \end{equation*}
        which in turn implies
        \begin{equation*}
            |Y^c|> |X|(|Y^c| - 3) + 2.
        \end{equation*}
        Since as before $|Y^c|\geq n/2$, then for $n$ at least $8$, this is a
        contradiction unless $|X|=1$, and we can check all graphs on fewer than
        $8$ vertices.  But if $|X|=1$, then again, $|\partial_G(X)|<|X|$ implies
        $G$ is disconnected, and we are done by Lemma \ref{lem:disconnected}.

        The argument for $c=0$ and $c=1$ is symmetric with the argument for $b$,
        so we are done.
    \end{proof}

\subsection{Cheeger Constant}\label{sec:Cheeger}
In this section we will use the results of Section \ref{sec:Isoperimetric} to get a lower bound for the quantity $\max\{h(G), h(G^c)\}.$
    \begin{definition}
        For a graph $G = (V, E)$, the \emph{volume} of $X \subseteq V$ is
        $\vol_G(X) = \sum_{x \in X} \deg(x)$ and the \emph{Cheeger ratio} of $X$ is defined as
        \begin{equation*}
            h_G(X) = \frac{|\d_G(X)|}{\min\{\vol_G(X), \vol_{G^c}\left(\bar X\right)\}}.
        \end{equation*}
        The \emph{Cheeger constant} of a graph $G$ is 
        \begin{equation*}
            h(G) = \min_{X \subset V} h_G(X).
        \end{equation*}
    \end{definition}
    We omit the subscript $G$ when it is clear from context. The Cheeger constant
    is symmetric in that $h_G(X) = h_G(\bar X)$, which allows us to constrain the
    feasible set to $\{X \subset V: \vol(X) \leq \vol(G)/2\}$.

    This next lemma is the bridge linking the results about the isoperimetric number to the results about the Cheeger constant. Once this result is established, the Cheeger result is an immediate consequence of the work in Section \ref{sec:Isoperimetric}.

    \begin{lemma}\label{lem:isogivescheeger}
        Let $G$ be a graph on $n$ vertices. Let $X\subset V(G)$. If
        $h_G(X) < 1 / \floor{n/2}$ then $i_G(X) < 1$.
    \end{lemma}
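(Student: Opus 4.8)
The plan is to prove the implication directly. Since $h_G(X) = h_G(\bar X)$ and $i_G(X) = i_G(\bar X)$, I may assume without loss of generality that $|X| \leq \floor{n/2}$; then $\min\{|X|,|\bar X|\} = |X|$, so $i_G(X) = |\d_G(X)|/|X|$, and it suffices to show $|\d_G(X)| < |X|$.

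The key step is to bound $\vol_G(X)$ from above using how the edges incident to $X$ are distributed. Every edge touching a vertex of $X$ either stays inside $X$ or crosses to $\bar X$, so, writing $e_G(X,X)$ for the number of edges internal to $X$, I have the identity $\vol_G(X) = 2\,e_G(X,X) + |\d_G(X)|$. Since at most $\binom{|X|}{2}$ edges can lie inside $X$, this yields $\vol_G(X) \leq |X|(|X|-1) + |\d_G(X)|$, and because $|X| \leq \floor{n/2}$ I obtain the crucial bound
\begin{equation*}
    \vol_G(X) \leq |X|\bigl(\floor{n/2}-1\bigr) + |\d_G(X)|.
\end{equation*}

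Now I feed in the hypothesis. From $h_G(X) < 1/\floor{n/2}$ together with $\min\{\vol_G(X),\vol_G(\bar X)\} \leq \vol_G(X)$, I get $|\d_G(X)|\floor{n/2} < \vol_G(X)$. Combining this with the volume bound above gives $|\d_G(X)|\floor{n/2} < |X|(\floor{n/2}-1) + |\d_G(X)|$, which rearranges to $|\d_G(X)|\bigl(\floor{n/2}-1\bigr) < |X|\bigl(\floor{n/2}-1\bigr)$. As long as $\floor{n/2} \geq 2$ (that is, $n \geq 4$), the positive factor $\floor{n/2}-1$ cancels and leaves $|\d_G(X)| < |X|$, which is exactly $i_G(X) < 1$.

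The only remaining cases are $n = 2, 3$, where $\floor{n/2} = 1$ and the cancellation degenerates; here the hypothesis reads $h_G(X) < 1$ with $|X| = 1$, and a direct check shows no single-vertex $X$ can satisfy it, since then $\vol_G(X) = |\d_G(X)| = \deg(x)$ forces $h_G(X) \geq 1$, so the implication holds vacuously. I expect the main obstacle to be spotting the identity $\vol_G(X) = 2\,e_G(X,X) + |\d_G(X)|$ and recognizing that the factor $\floor{n/2}-1$ produced by bounding the internal edges is precisely what cancels against the $\floor{n/2}$ coming from the Cheeger hypothesis; everything else is routine bookkeeping.
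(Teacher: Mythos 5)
Your proof is correct, and it is a genuine streamlining of the paper's argument. The core computation is the same in both: write $\vol_G(X) = 2e_G(X,X) + |\d_G(X)|$, bound the internal edges by $\binom{|X|}{2}$, and play the resulting inequality off against the Cheeger hypothesis so that the factor $\floor{n/2}-1$ cancels. The difference is organizational, and it matters. The paper normalizes by volume (taking $\vol_G(X)\le\vol_G(\bar X)$) and is then forced into two cases according to whether $|X|$ or $|\bar X|$ is the smaller set, since the isoperimetric denominator $\min\{|X|,|\bar X|\}$ need not align with the volume minimum; its Case 2 repeats the computation with $\bar X$ in place of $X$. You instead normalize by cardinality ($|X|\le\floor{n/2}$) and observe that the Cheeger hypothesis only needs the one-sided estimate $\min\{\vol_G(X),\vol_G(\bar X)\}\le\vol_G(X)$, which collapses the two cases into one. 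Your use of $\floor{n/2}$ throughout also handles even and odd $n$ uniformly, where the paper runs the even case and then asserts the odd case is "the exact same argument." One trivial quibble: in your $n\le 3$ discussion, if $x$ is an isolated vertex then $h_G(\{x\})$ is a $0/0$ expression rather than something $\ge 1$, but the conclusion $i_G(X)=0<1$ holds outright there, so nothing is lost.
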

    \begin{proof}
        First suppose that $n$ is even.
        
        \textbf{Case 1.}
        Suppose that $\vol(X) \leq \vol(\bar X)$ and $|X| \leq |\bar X|$.
        Then $h(X) < 2/n$ implies that
        \begin{equation*}
	       h(X) = \frac{|\d(X)|}{\vol(X)}\cdot\frac{|X|}{|X|} = \frac{|X|}{\vol(X)} i(X) < \frac 2n
        \end{equation*}
        Observe that $\vol(X) \leq |X|(i(X) + |X| - 1)$ since  $|X|i(X) = |\d(X)|$ and a vertex in $X$ is connected to at most $|X|-1$ other vertices in $X$. Thus
        \begin{align*}
        	i(X)
        	&< \left(\frac 2n\right) \left(\frac{\vol(X)}{|X|}\right) \\
        	&\leq \left(\frac 2n\right) \left(\frac{|X|(i(X) + |X| - 1)}{|X|}\right) \\
        	&< \frac{2(i(X) +|X| -1)}{n}.
        \end{align*}
        Then the above inequality becomes
        \begin{align*}
            i(X)\left(1 - \frac{2}{n}\right) &< \frac{2|X| - 2}{n}\leq \frac{n-2}{n}.
        \end{align*}
        The second inequality follows since $|X|\leq |\bar X|$ implies that $|X|\leq n/2$. Then isolating $i(X)$ in the above inequality yields $i(X) < 1.$
        %Rearranging yields the desired inequality.

        \textbf{Case 2.}
        Suppose that $\vol_G(X) \leq \vol_G(\bar X)$ and $|\bar X| \leq |X|$. Then
        \begin{align*}
            h_G(X) &= \frac{|\d_G(X)|}{\vol_G(X)} &\text{and}& &i_G(X) &= \frac{|\d_G(X)|}{|\bar X|}.
        \end{align*}
        Then
        \[
        h_G(X) = \frac{|\d_G(X)|}{\vol_G(X)} = \frac{|\bar X|i_G(X)}{\vol_G(X)} < \frac{2}{n}.
        \]
        Hence
        \begin{align*}
            \frac{n}{2}\cdot i_G(X)|\bar X| &< \vol_G(X)\\
            &\leq \vol_G(\bar X)\\
            &\leq |\d_G(X)| + |\bar X|(|\bar X| - 1)\\
            &= i_G(X)|\bar X| + |\bar X|(|\bar X|  - 1).
        \end{align*}
        Then we have
        \begin{align*}
            \frac{n}{2}\cdot i_G(X) &< i_G(X) + |\bar X| - 1\\
            &\leq i_G(X) + \frac{n}{2} - 1.
        \end{align*}
        Thus we get
        \begin{align*}
            i_G(X)\left(\frac{n}{2} -1 \right) &< \frac{n}{2} - 1\\
            i_G(X) &< 1.
        \end{align*}

        Now suppose that $n$ is odd. The exact same arguments work in this case too where we replace $\frac{2}{n}$ with $\frac{2}{n-1}$ and the upper bound on the size of the smaller set is now $\frac{n-1}{2}$ instead of $\frac{n}{2}$.
    \end{proof}

    \begin{theorem}\label{thm:maxcheeger}
        Let $G$ be any graph with $n\geq 2$ vertices other than the path on 4 vertices. Then
        \[
        \max\{h(G), h(G^c)\} \geq \frac{1}{\left\lfloor \frac{n}{2}\right\rfloor}.
        \]
    \end{theorem}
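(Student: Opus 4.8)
The plan is to derive Theorem~\ref{thm:maxcheeger} as a quick corollary of the two preceding results via a contradiction argument. Suppose, for contradiction, that both $h(G) < 1/\floor{n/2}$ and $h(G^c) < 1/\floor{n/2}$. Because $h(G) = \min_{X} h_G(X)$ is attained as a minimum, the first inequality guarantees a subset $X \subset V(G)$ with $h_G(X) < 1/\floor{n/2}$, and likewise the second yields a subset $Y \subset V(G^c)$ with $h_{G^c}(Y) < 1/\floor{n/2}$.

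The key observation I would emphasize is that Lemma~\ref{lem:isogivescheeger} is stated for an arbitrary graph on $n$ vertices, so it applies verbatim both to $G$ (with the set $X$) and to $G^c$ (with the set $Y$); crucially, $G$ and $G^c$ have the same number of vertices, so the threshold $1/\floor{n/2}$ is identical in both invocations. Applying the lemma therefore gives $i_G(X) < 1$ and $i_{G^c}(Y) < 1$. Since $i(G) = \min_X i_G(X) \le i_G(X)$ and similarly $i(G^c) \le i_{G^c}(Y)$, we conclude $i(G) < 1$ and $i(G^c) < 1$, that is, $\max\{i(G), i(G^c)\} < 1$.

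This directly contradicts Theorem~\ref{thm:maxisoperimetric}, which asserts $\max\{i(G), i(G^c)\} \ge 1$ for every graph on $n \ge 2$ vertices other than the path on four vertices. I would note that the excluded case is consistent across the two statements: since the path on four vertices is self-complementary, excluding it once suffices, so the hypothesis of Theorem~\ref{thm:maxcheeger} matches that of Theorem~\ref{thm:maxisoperimetric}. Hence the assumption is untenable and $\max\{h(G), h(G^c)\} \ge 1/\floor{n/2}$, as claimed.

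There is no genuine obstacle here; the entire analytic content has been pushed into Lemma~\ref{lem:isogivescheeger} and Theorem~\ref{thm:maxisoperimetric}. The only points requiring care are (i) reading the Cheeger bound as the negation of the lemma's hypothesis, and (ii) verifying that the lemma can be applied symmetrically to both $G$ and $G^c$ with the same threshold, which is exactly what makes the contradiction with the isoperimetric bound close cleanly.
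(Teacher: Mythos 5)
Your proposal is correct and is essentially the paper's own argument in contrapositive form: both proofs combine Lemma~\ref{lem:isogivescheeger} with Theorem~\ref{thm:maxisoperimetric} in the only natural way, the paper arguing directly from $i_G(X)\geq 1$ and you arguing by contradiction from $h_G(X)<1/\lfloor n/2\rfloor$. No substantive difference.
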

    \begin{proof}
        Suppose that $X, Y\subset V(G)$ with $h(G) = h_G(X)$ and $h(G^c) = h_{G^c}(Y).$ By Theorem \ref{thm:maxisoperimetric} we may assume without loss of generality that $i_G(X) \geq 1$. Thus by Lemma \ref{lem:isogivescheeger} $h_G(X) \geq \frac{1}{\left\lfloor\frac{n}{2}\right\rfloor}$.
    \end{proof}

\Hidden{
\begin{figure}[hp]
\centering
    \begin{subfigure}[t]{0.31\textwidth}
    \includegraphics[width=\textwidth]{Images/V4CheegerPython.png}
    \end{subfigure}
    % V5
    \begin{subfigure}[t]{0.31\textwidth}
    \includegraphics[width=\textwidth]{Images/V5CheegerPython.png}
    \end{subfigure}
    % V6
    \begin{subfigure}[t]{0.31\textwidth}
    \includegraphics[width=\textwidth]{Images/V6CheegerPython.png}
    \end{subfigure}
    % V7
    \begin{subfigure}[t]{0.31\textwidth}
    \includegraphics[width=\textwidth]{Images/V7CheegerPython.png}
    \end{subfigure}
    % V8
    \begin{subfigure}[t]{0.31\textwidth}
    \includegraphics[width=\textwidth]{Images/V8CheegerPython.png}
    \end{subfigure}
    % V9
    \begin{subfigure}[t]{0.31\textwidth}
    \includegraphics[width=\textwidth]{Images/V9CheegerPython.png}
    \end{subfigure}
    \caption{Scatterplots of $(h(G), h(G^c))$ for all graphs of given order in addition to the box with $x = \frac{1}{\floor{\frac{n}{2}}}$ and $y = \frac{1}{\floor{\frac{n}{2}}}$}
    \label{fig:CheegerPlots}
\end{figure}
}

\section{Second Eigenvalue of the Normalized Laplacian}\label{sec:Eigenvalue}

%Let $0 = \lambda_1(G) \leq \lambda_2(G) \leq\cdots\leq \lambda_n(G)$ denote the eigenvalues of the normalized Laplacian matrix of a graph $G$. 
It was shown in \cite{aksoy2018maximum} that for a simple connected graph on $n$ vertices $\lambda_2(G) \geq (1 + o(1))\frac{54}{n^3}$. This result gives a lower bound for $\max\{\lambda_2(G), \lambda_2(G^c)\}$ on the order of $1/n^3$. Using results from Section \ref{sec:Cheeger}, we are able to improve this bound by an order of magnitude.  Below we restate Theorem \ref{thm:main_thm} as our main result.  We will first need the following well-known lower bound on $\lambda_2$.
\begin{lemma}[Theorem 2.2 of \cite{chung1997spectral}]\label{thm:ChungCheegerEigenvalue}
    For a connected graph $G$
    \[
    \lambda_2(G) > \frac{h(G)^2}{2}.
    \]
\end{lemma}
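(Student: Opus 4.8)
This is the hard direction of the Cheeger inequality for the normalized Laplacian, so the plan is to run the classical sweep-cut argument. First I would record the variational characterization of $\lambda_2$. Writing $f = D^{1/2}g$ for an eigenvector $f$ of $\nlapl(G)$ affording $\lambda_2$, the eigenvalue equation $\nlapl f = \lambda_2 f$ becomes the generalized equation $Lg = \lambda_2 D g$, so that for every vertex $v$ one has $\sum_{u:\,\{u,v\}\in E}(g(v)-g(u)) = \lambda_2\deg(v)\,g(v)$, and the Rayleigh quotient reads
\[
\lambda_2(G) = \frac{\sum_{\{u,v\}\in E}(g(u)-g(v))^2}{\sum_{v}\deg(v)\,g(v)^2},
\]
where $g$ is constrained by $\sum_v \deg(v)\,g(v)=0$ (orthogonality to the trivial eigenvector $D^{1/2}\1$).

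Next I would pass to a one-sided test function. Since $\sum_v \deg(v)\,g(v)=0$, a nonzero $g$ changes sign, and at least one of $\{g>0\}$, $\{g<0\}$ has volume at most $\vol(V)/2$; after possibly replacing $g$ by $-g$, let $\bar g = \max(g,0)$ be the positive part on that side. Using the identity $\sum_{\{u,v\}\in E}(\bar g(u)-\bar g(v))^2 = \sum_v \bar g(v)\sum_{u:\,\{u,v\}\in E}(\bar g(v)-\bar g(u))$ together with the pointwise bound $\bar g(v)(\bar g(v)-\bar g(u)) \le g(v)(g(v)-g(u))$ (immediate for $v$ in the support of $\bar g$, since $\bar g(u)\ge 0 \ge g(u)$ there) and then $Lg=\lambda_2 Dg$ vertex by vertex, I would show the Rayleigh quotient can only decrease:
\[
\frac{\sum_{\{u,v\}\in E}(\bar g(u)-\bar g(v))^2}{\sum_v \deg(v)\,\bar g(v)^2} \le \lambda_2(G).
\]
This reduces matters to a nonnegative function supported on a set of volume at most $\vol(V)/2$.

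The heart of the argument, and the step I expect to be the main obstacle, is converting this analytic estimate into a genuine combinatorial cut. I would first apply Cauchy--Schwarz to $\sum_{\{u,v\}\in E}|\bar g(u)^2-\bar g(v)^2| = \sum_{\{u,v\}\in E}|\bar g(u)-\bar g(v)|\,|\bar g(u)+\bar g(v)|$, using $\sum_{\{u,v\}\in E}(\bar g(u)+\bar g(v))^2 \le 2\sum_v \deg(v)\,\bar g(v)^2$, to obtain
\[
\frac{\sum_{\{u,v\}\in E}|\bar g(u)^2-\bar g(v)^2|}{\sum_v \deg(v)\,\bar g(v)^2}\le \sqrt{2\lambda_2(G)}.
\]
Then I would run the co-area (layer-cake) step: writing $\bar g(v)^2 = \int_0^\infty \1[\bar g(v)^2 > t]\,dt$ expresses the numerator as $\int_0^\infty |\d(S_t)|\,dt$ and the denominator as $\int_0^\infty \vol(S_t)\,dt$, where $S_t = \{v : \bar g(v)^2 > t\}$ is a sweep cut contained in the support of $\bar g$. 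Each such $S_t$ has $\vol(S_t)\le \vol(V)/2$, so by definition of the Cheeger constant $|\d(S_t)| \ge h(G)\,\vol(S_t)$; integrating gives $h(G) \le \sqrt{2\lambda_2(G)}$, i.e. $\lambda_2(G) \ge h(G)^2/2$.

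Finally, to upgrade this to the strict inequality I would track the equality cases. Equality in Cauchy--Schwarz would force $\bar g(u)-\bar g(v)$ to be proportional to $\bar g(u)+\bar g(v)$ across every edge, and equality in the truncation step would force $g$ not to change sign; but connectedness of $G$ together with the constraint $\sum_v\deg(v)\,g(v)=0$, which forbids a one-signed nonzero eigenfunction, makes simultaneous equality impossible unless $g\equiv 0$. Carefully ruling out these degenerate configurations is the delicate part, and it is precisely what yields the strict bound $\lambda_2(G) > h(G)^2/2$ rather than merely $\ge$.
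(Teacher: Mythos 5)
The paper does not actually prove this lemma; it imports it verbatim as Theorem 2.2 of Chung's book, and your sketch is essentially Chung's own argument, so there is no divergence of method to report. The outline is correct: the truncation bound via the pointwise inequality $\bar g(v)(\bar g(v)-\bar g(u))\le g(v)(g(v)-g(u))$, the Cauchy--Schwarz step, and the co-area identification of $\sum_{\{u,v\}\in E}\abs{\bar g(u)^2-\bar g(v)^2}$ with $\int_0^\infty\abs{\d (S_t)}\,dt$ all go through as you describe, and every sweep set $S_t$ sits inside the side of volume at most $\vol(V)/2$, so the Cheeger bound applies to each of them. The one place you stop short --- the strict inequality --- closes more easily than your proposed equality-case analysis suggests: instead of tracking equality in Cauchy--Schwarz and in the truncation step, keep the exact identity $\sum_{\{u,v\}\in E}(\bar g(u)+\bar g(v))^2=2\sum_v\deg(v)\,\bar g(v)^2-\sum_{\{u,v\}\in E}(\bar g(u)-\bar g(v))^2$. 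Because $G$ is connected and $g$ changes sign, $\bar g$ is positive somewhere and zero somewhere, so some edge has $\bar g(u)\neq\bar g(v)$ and the subtracted term is strictly positive; this alone makes the estimate $\sum_{\{u,v\}\in E}(\bar g(u)+\bar g(v))^2\le 2\sum_v\deg(v)\,\bar g(v)^2$ strict, hence $h(G)<\sqrt{2\lambda_2(G)}$, with no further degenerate configurations to rule out.
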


\begin{theorem}\label{thm:CheegertoEigenvalue}
    Let $\nlapl(G)$ denote the normalized Laplacian matrix of a graph $G$. Let $\lambda_2(G)$ denote the second smallest eigenvalue of $\nlapl(G)$. Then
    \[
    \max\{\lambda_2(G), \lambda_2(G^c)\} > \frac{2}{n^2}
    \]
    for even $n$ and
    \[
    \max\{\lambda_2(G), \lambda_2(G^c)\} > \frac{2}{(n-1)^2}
    \]
    for odd $n$.
\end{theorem}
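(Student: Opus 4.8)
The plan is to combine the two ingredients already established: the Cheeger lower bound of Theorem~\ref{thm:maxcheeger} and the Cheeger inequality of Lemma~\ref{thm:ChungCheegerEigenvalue}. Since Theorem~\ref{thm:maxcheeger} holds for every graph on $n\geq 2$ vertices \emph{except} the path $P_4$, I would first dispose of that lone exceptional case by direct computation. The complement of $P_4$ is again isomorphic to $P_4$, so it suffices to compute $\lambda_2(P_4)$ once and verify it exceeds the even-$n$ target $2/n^2 = 1/8$. A short calculation of the normalized Laplacian spectrum of $P_4$ yields eigenvalues $\{0,\tfrac12,\tfrac32,2\}$, so $\lambda_2(P_4) = \tfrac12 > \tfrac18$, and the bound holds for the exceptional graph.

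For every remaining graph $G$ the argument is a short chain of inequalities. By Theorem~\ref{thm:maxcheeger} we have $\max\{h(G),h(G^c)\} \geq 1/\lfloor n/2\rfloor$; relabeling $G$ and $G^c$ if necessary, I would assume the maximum is attained by $h(G)$, so that $h(G) \geq 1/\lfloor n/2\rfloor > 0$. The positivity of $h(G)$ is the single point demanding care: Lemma~\ref{thm:ChungCheegerEigenvalue} applies only to connected graphs, and I would note that $h(G)>0$ forces $G$ to be connected, since a disconnected graph has a component $X$ with empty boundary and hence $h(G)=0$. With $G$ connected the Cheeger inequality applies and gives
\[
\lambda_2(G) > \frac{h(G)^2}{2} \geq \frac{1}{2\lfloor n/2\rfloor^2}.
\]

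It then remains only to simplify the right-hand side by parity: for even $n$, $\lfloor n/2\rfloor = n/2$ gives $1/(2\lfloor n/2\rfloor^2) = 2/n^2$, while for odd $n$, $\lfloor n/2\rfloor = (n-1)/2$ gives $2/(n-1)^2$. Since $\max\{\lambda_2(G),\lambda_2(G^c)\} \geq \lambda_2(G)$, both claimed bounds follow at once. I do not anticipate a serious obstacle here, as the genuinely hard work was carried out in Theorems~\ref{thm:maxisoperimetric} and~\ref{thm:maxcheeger}; the only two things requiring attention are the $P_4$ exception and the verification that the graph achieving the larger Cheeger constant is connected, so that invoking Lemma~\ref{thm:ChungCheegerEigenvalue} is legitimate and the resulting bound on $\lambda_2$ is meaningful.
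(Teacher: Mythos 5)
Your proposal is correct and follows essentially the same route as the paper: combine Theorem~\ref{thm:maxcheeger} with the Cheeger inequality of Lemma~\ref{thm:ChungCheegerEigenvalue} and dispose of the exceptional graph $P_4$ by computing $\lambda_2(P_4)=\tfrac12$ directly. You have merely made explicit two details the paper leaves implicit --- that positivity of the larger Cheeger constant forces connectivity of the corresponding graph, and the parity computation of $1/(2\lfloor n/2\rfloor^2)$ --- both of which check out.
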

\begin{proof}
    The result follows directly from Theorem \ref{thm:maxcheeger} and Theorem \ref{thm:ChungCheegerEigenvalue}. We also check the path on 4 vertices and find that $\lambda_2 = \frac12$ and see that this bound still holds. 
\end{proof}

\begin{figure}[h]
    \centering
    \includegraphics[scale=0.5]{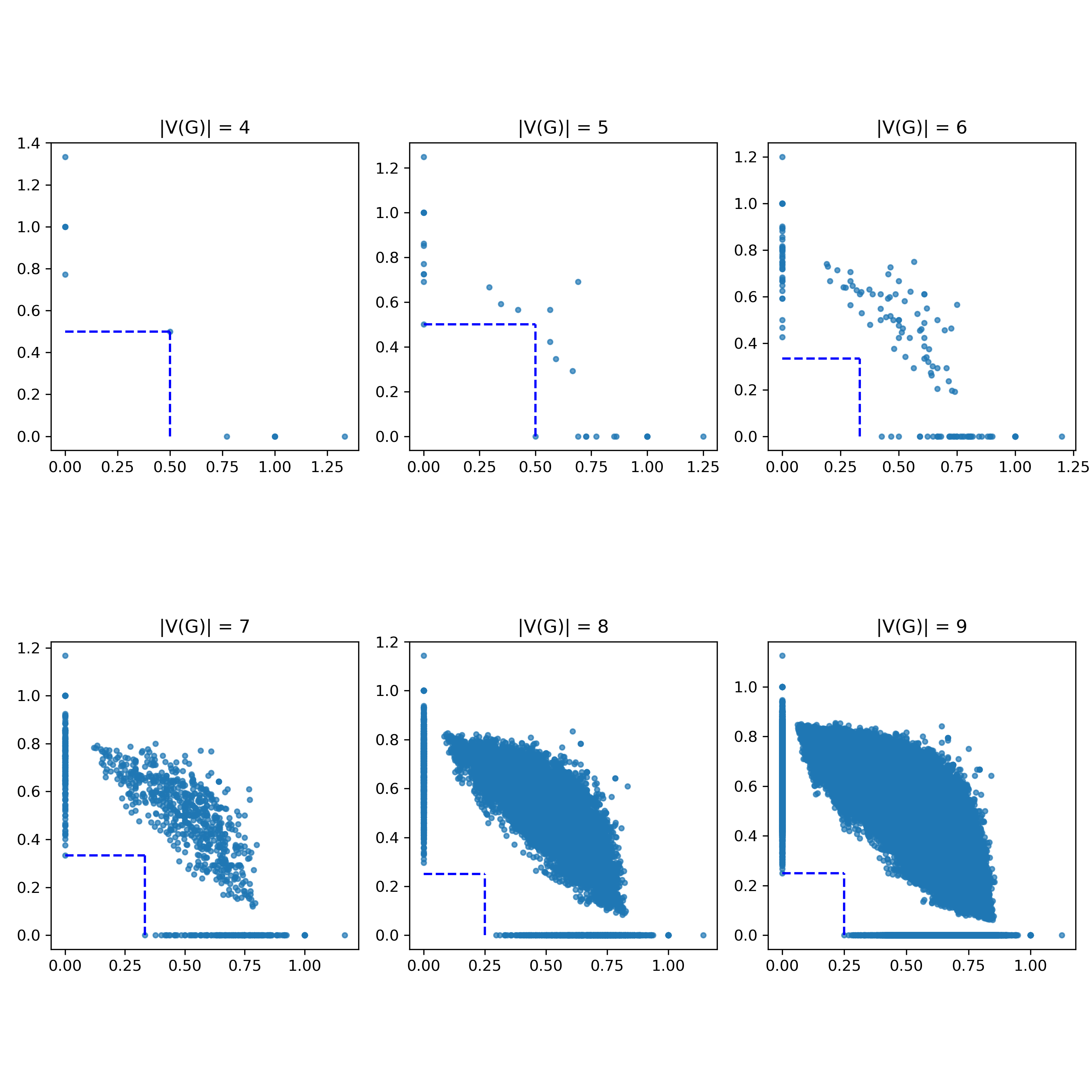}
    \caption{Plots of $(\lambda_2(G), \lambda_2(G^c))$ for all graphs of given order. Also included is the box traced by $x = \frac{1}{\left\lfloor\frac{n}{2}\right\rfloor}$ and $y = \frac{1}{\left\lfloor\frac{n}{2}\right\rfloor}$. Note that no graph of these sizes has $\max\{\lambda_2(G), \lambda_2(G^c)\} < \frac{1}{\left\lfloor\frac{n}{2}\right\rfloor}$.}
    \label{fig:evalplots}
\end{figure}

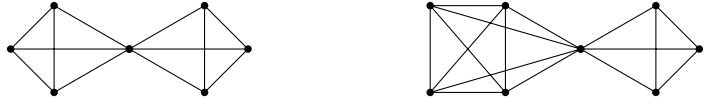
\begin{figure}[h]
    \centering
     \begin{tikzpicture}
     \tikzstyle{every node}=[circle, fill=black, inner sep=1pt]
     %The odd one
        \draw{(0,0)node{}--(-1, {-1/sqrt(3)})node{}--(-1, {1/sqrt(3)})node{}--({-1-1/sqrt(3)}, 0)node{}--(-1, {-1/sqrt(3)})
        (0,0)--(-1, {1/sqrt(3)})
        (0,0)--({-1-1/sqrt(3)}, 0)
        
        (0,0)node{}--(1, {-1/sqrt(3)})node{}--(1, {1/sqrt(3)})node{}--({1+1/sqrt(3)}, 0)node{}--(1, {-1/sqrt(3)})
        (0,0)--(1, {1/sqrt(3)})
        (0,0)--({1+1/sqrt(3)}, 0)
        };
        
        %The even one
        \draw{(6,0)node{}--(5, {-1/sqrt(3)})node{}--(5, {1/sqrt(3)})node{}--({4}, {-1/sqrt(3)})node{}--(5, {-1/sqrt(3)})
        (6,0)--(5, {1/sqrt(3)})
        (6,0)--({4}, {-1/sqrt(3)})
        (4,{1/sqrt(3)})node{}--(6,0)
        (4,{1/sqrt(3)})--(5,{1/sqrt(3)})
        (4,{1/sqrt(3)})--(5, {-1/sqrt(3)})
        (4,{1/sqrt(3)})--(4,{-1/sqrt(3)})
        
        (6,0)node{}--(7, {-1/sqrt(3)})node{}--(7, {1/sqrt(3)})node{}--({7+1/sqrt(3)}, 0)node{}--(7, {-1/sqrt(3)})
        (6,0)--(7, {1/sqrt(3)})
        (6,0)--({7+1/sqrt(3)}, 0)
        };
    \end{tikzpicture}
    \caption{Graphs that achieve (left) or get closest to achieving (right) the bound in Conjecture \ref{conj:nLaplMaxBound}.}
    \label{fig:extremalNLaplTogether}
\end{figure}

For all orders $n = 5,6,7,8,9$, only one pair of graphs is closest to the dotted box seen in Figure \ref{fig:evalplots}. These are graphs where one of $G$ or $G^c$ is the join of an isolated vertex with disconnected complete graphs as close to the same size of each other as possible (see Figure \ref{fig:extremalNLaplTogether}). We will see these graphs again in Section \ref{sec:Characterization} when we give a partial characterization of graphs that achieve the Cheeger lower bound in Theorem \ref{thm:cheegerCharacterGdisconnected}.
%These are the same graphs from the partial characterization given in Theorem \ref{thm:cheegerCharacterGdisconnected}. 
Note that for $n$ odd these graph pairs achieve $\max\{\lambda_2(G), \lambda_2(G^c)\} = \frac{1}{\left\lfloor\frac{n}{2}\right\rfloor}$ whereas for $n$ even this is not achieved. When $n=4$ the graph that is closest to the dotted line is the self-complementary path on 4 vertices, which is not of the form given in Theorem \ref{thm:cheegerCharacterGdisconnected}. 
%not an isolated vertex joined to disconnected $K_2$ and $K_1$. \textcolor{red}{Reword probably. Also, it seems like we could maybe change our plots to just have the line $2/(n-1)$ as even in the $n$ even case we're still a little bigger than that. Consider redoing.}
This pattern motivates the following conjecture.
\begin{conjecture}\label{conj:nLaplMaxBound}
    Let $G$ be a graph on $n\geq5$ vertices. Then
    $$\max\{\lambda_2(G), \lambda_2(G^c)\}\geq\frac{2}{n-1}$$
    with equality if and only if $n$ is odd and $G$ is the join of an isolated vertex with two complete graphs each of order $\frac{n-1}{2}$.
\end{conjecture}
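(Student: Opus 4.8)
The plan is to prove Conjecture~\ref{conj:nLaplMaxBound} by removing the slack in the chain behind Theorem~\ref{thm:CheegertoEigenvalue}. For odd $n=2m+1$, Theorem~\ref{thm:maxcheeger} already yields $\max\{h(G),h(G^c)\}\geq\frac{2}{n-1}$, and the entire gap between the proven bound $\frac{2}{(n-1)^2}$ and the conjectured $\frac{2}{n-1}$ sits in the quadratic loss of Lemma~\ref{thm:ChungCheegerEigenvalue}: feeding $h=\frac{2}{n-1}$ into $\lambda_2>\tfrac12 h^2$ gives exactly $\frac{2}{(n-1)^2}$, whereas a linear relation would give $\frac{2}{n-1}$. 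A short computation on the conjectured extremizer $G=K_1\vee(K_m\cup K_m)$ confirms this is the right target: the test vector equal to $+1$ on one clique, $-1$ on the other, and $0$ on the apex satisfies $\sum_x d_x f(x)=0$ and has Rayleigh quotient $\frac{2m}{2m^2}=\frac1m$, so $\lambda_2(G)\leq\frac{2}{n-1}$, while $G^c=K_1\cup K_{m,m}$ is disconnected with $\lambda_2(G^c)=0$. On this graph $\lambda_2(G)$ equals $h(G)$ rather than $\tfrac12 h(G)^2$, so any successful argument must bypass Cheeger's square.

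First I would clear the structural cases and thereby isolate the extremal family. If $G$ is disconnected then $\lambda_2(G)=0$ and the maximum is carried by $G^c$, which is a join; for joins I would bound $\lambda_2$ directly from their block structure and symmetry rather than through $h(G)$, which both secures the bound in this regime and explains why the extremizer is exactly a join, $K_1\vee(K_m\cup K_m)$, whose complement is disconnected. For even $n$ the same analysis should give strict inequality, consistent with the claim that $\frac{2}{n-1}$ is then not attained.

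The heart of the matter is that Lemma~\ref{thm:ChungCheegerEigenvalue} cannot be improved universally: for $G=C_n$ we have $h(G)\sim\frac1n$ but $\lambda_2(G)\sim\frac1{n^2}$, so a blanket estimate $\lambda_2\gtrsim h$ is false and the Nordhaus--Gaddum hypothesis must enter essentially. I would therefore set up a dichotomy on the cut $X$ realizing $h(G)$. If that cut is balanced with a dense boundary---as for the extremizer, where $\lambda_2(G)=h(G)$---then Cheeger's square is slack and I would transfer $h(G)\geq\frac{2}{n-1}$ to $\lambda_2(G)\geq\frac{2}{n-1}$ by accounting for the slack exactly on such configurations. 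If instead $G$ is thin, so that a sweep of the Fiedler vector yields a sparse path-like cut with $\lambda_2(G)\ll h(G)$, then the same cut is dense in $G^c$ because $|\partial_G(X)|+|\partial_{G^c}(X)|=|X||\bar X|$, and I would use a test vector adapted to that dense structure to force $\lambda_2(G^c)$ above $\frac{2}{n-1}$.

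I expect the main obstacle to be exactly the task of making this dichotomy quantitative, and to be genuine. Unlike the combinatorial Laplacian, the normalized Laplacian admits no clean complementation identity---the degree normalizations of $G$ and $G^c$ differ, so there is no analogue of $\mu_i(G^c)=n-\mu_{n-i+2}(G)$---and every bound must be assembled from test vectors or interlacing, with standard Cheeger rounding inevitably losing a square on thin cuts. The realistic route is thus a rigidity argument: show that $\lambda_2(G)$ can approach $\frac{2}{n-1}$ from below only when $G$ lies in a small neighborhood of the join-of-two-cliques extremizer, control that neighborhood by exact or perturbative eigenvalue estimates, and show that every other structure forces a strictly larger $\lambda_2$ on one of $G$, $G^c$. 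The equality characterization would then follow from the rigidity, matching the numerics in Figure~\ref{fig:evalplots}.
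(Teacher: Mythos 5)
The statement you are addressing is Conjecture \ref{conj:nLaplMaxBound}, which the paper itself leaves open: the authors establish only the partial results that the conjectured extremizer $K_1\vee(K_{(n-1)/2}\cup K_{(n-1)/2})$ attains $\max\{\lambda_2(G),\lambda_2(G^c)\}=\frac{2}{n-1}$ exactly (Proposition \ref{thm:kregjoin} and Corollary \ref{cor:cutclique}) and that $\lambda_2(G)+\lambda_2(G^c)\geq\frac{1}{n-1}$ for regular graphs (Proposition \ref{thm:sumNLaplregular}). Your proposal correctly reproduces the first of these: the test-vector computation giving $\lambda_2(K_1\vee(K_m\cup K_m))\leq\frac1m$ is right (and equality holds by the paper's block-matrix argument), as is the observation that $G^c=K_1\cup K_{m,m}$ is disconnected. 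Your diagnosis of why Theorem \ref{thm:CheegertoEigenvalue} cannot simply be sharpened---the quadratic loss in Lemma \ref{thm:ChungCheegerEigenvalue} is unavoidable in general, witnessed by $C_n$ where $h\sim 1/n$ but $\lambda_2\sim 1/n^2$---is also accurate.

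However, the proposal is a research plan rather than a proof, and the gap it leaves is exactly the hard part. The entire content of the conjecture lives in the step you describe as ``making the dichotomy quantitative'': when the optimal Cheeger cut of $G$ is thin and path-like so that $\lambda_2(G)\ll h(G)$, you must certify $\lambda_2(G^c)\geq\frac{2}{n-1}$, and no mechanism for doing so is supplied. The identity $|\partial_G(X)|+|\partial_{G^c}(X)|=|X||\bar X|$ shows that the single cut $X$ is dense in $G^c$, but one dense cut does not lower-bound $\lambda_2(G^c)$: a Cheeger-based argument needs \emph{every} cut of $G^c$ to be dense, and the degree normalization of $G^c$ is unrelated to that of $G$, so nothing transfers for free---this is precisely the absence of a complementation identity that you yourself flag. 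The ``rigidity'' endgame (that $\lambda_2(G)$ near $\frac{2}{n-1}$ forces $G$ close to the join of two cliques) is asserted but not argued, and the even-$n$ strictness claim is not addressed beyond an expectation. The honest assessment is that the proposal identifies the correct extremal family and the correct obstruction, but does not advance beyond the partial progress already recorded in the paper, and the conjecture remains open.
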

These next two results provide some partial progress in proving this conjecture.

\begin{proposition}\label{thm:kregjoin}
    Let $G$ be the join of an isolated vertex with a disconnected $k$-regular
    graph. Then
    \begin{equation*}
        \lambda_2(G) = \frac{1}{k+1}.
    \end{equation*}
\end{proposition}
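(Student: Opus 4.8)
The plan is to produce an explicit eigenvector realizing the value $\frac1{k+1}$ and then to compute the entire normalized Laplacian spectrum of $G$ to confirm this is the second-smallest eigenvalue. Write $G = K_1 \vee H$, where the apex vertex $v$ is joined to every vertex of a $k$-regular graph $H$ on $m$ vertices having components $H_1,\dots,H_t$ with $t\ge 2$; then $n=m+1$, the apex has degree $m$, and every vertex of $H$ has degree $k+1$ in $G$. I would work with the generalized eigenvalue equation $Lg=\lambda Dg$, i.e.\ $\deg(x)g(x)-\sum_{y\sim x}g(y)=\lambda\deg(x)g(x)$ for each vertex $x$, which has the same eigenvalues $\lambda_i(G)$ as $\nlapl(G)$.

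For the value-achieving eigenvector, I would take $g$ with $g(v)=0$ and $g$ equal to a constant $c_j$ on each component $H_j$. Since each $H_j$ is itself $k$-regular, a vertex $u\in H_j$ has its $k$ neighbours inside $H_j$ together with the apex, so the eigenvalue equation at $u$ reads $(k+1)c_j - kc_j = \lambda(k+1)c_j$, forcing $\lambda = \frac1{k+1}$ whenever some $c_j\ne0$. The equation at the apex reduces to $\sum_{j}|H_j|c_j = 0$; because $t\ge2$ this linear constraint has nonzero solutions (for instance $c_1=|H_2|,\ c_2=-|H_1|$, and the rest zero), so $\frac1{k+1}$ is genuinely an eigenvalue of $\nlapl(G)$, with multiplicity at least $t-1$.

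To see that it is $\lambda_2$, I would diagonalize $N=D^{-1/2}AD^{-1/2}$ (so $\lambda=1-\nu$). Each eigenvector $\phi$ of $A_H$ with $A_H\phi=\theta\phi$ and $\phi\perp\1$ lifts to an eigenvector $(0,\phi)^{\top}$ of $N$ with eigenvalue $\theta/(k+1)$, hence an $\nlapl$-eigenvalue $\frac{k+1-\theta}{k+1}$; these account for an $(m-1)$-dimensional space. Since $H$ is $k$-regular its largest adjacency eigenvalue is $\theta=k$, so every such eigenvalue satisfies $\frac{k+1-\theta}{k+1}\ge\frac1{k+1}$, with equality exactly on the $(t-1)$-dimensional part of the $k$-eigenspace lying in $\1^\perp$. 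The remaining two dimensions are spanned by the apex indicator and $\1$ on $H$; restricting $N$ to this invariant plane gives a $2\times2$ matrix whose characteristic polynomial factors as $((k+1)\nu+1)(\nu-1)$, yielding $\nu=1$ (the trivial $\nlapl$-eigenvalue $0$) and $\nu=-\frac1{k+1}$ (the $\nlapl$-eigenvalue $\frac{k+2}{k+1}>1$). As $G$ is connected, $\lambda_2(G)>0$, and comparing all the values above shows the smallest positive one is exactly $\frac1{k+1}$.

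The main thing requiring care is the spectral bookkeeping: I must verify that the eigenvectors of $A_H$ orthogonal to $\1$ together with the apex/$\1$ plane really span all of $\R^n$, that $N$ leaves that plane invariant (using $k$-regularity via $A_H\1=k\1$), and that $\frac{k+1-\theta}{k+1}$ is minimized at $\theta=k$. The degenerate case $k=0$, where $H$ is an independent set and $G$ is a star, should be noted separately but fits the same formula, giving $\lambda_2=1$.
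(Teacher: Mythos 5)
Your proof is correct and follows essentially the same route as the paper: both diagonalize the normalized adjacency matrix $D^{-1/2}AD^{-1/2}$ by lifting eigenvectors of $A_H$ orthogonal to $\1$ (extended by zero at the apex) and then analyzing the small complementary invariant subspace spanned by the apex indicator and $\1$ on $H$. The only differences are cosmetic: you treat an arbitrary number $t\ge 2$ of components uniformly (reducing the leftover space to a $2\times 2$ block, where the paper fixes $t=2$ and handles three residual eigenvectors by hand) and you add an upfront harmonic-eigenfunction verification that $\tfrac{1}{k+1}$ is attained, neither of which changes the substance of the argument.
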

\begin{proof}
    $G^c$ is disconnected, hence $\lambda_2(G^c) = 0$. For the second
    eigenvalue of the normalized Laplacian of $G$, we first consider the eigenvalues of
    the normalized adjacency matrix $\nadj$, which has the block form
    \begin{equation*}
        \nadj = \begin{bmatrix}
            \frac{1}{k+1} A & \frac{1}{\sqrt{n(k+1)}} \1 \\
            \frac{1}{\sqrt{n(k+1)}} \1^T & 0
        \end{bmatrix},
    \end{equation*}
    where $A$ is the adjacency matrix of the  $k$-regular
    component of $G$. (Note that we are using $|V(G)| = n+1$ here.)
    
    If $\vec x$ is an eigenvector of $A$ orthogonal to $\1$ with eigenvalue
    $\theta < k$, then it can be used to construct an eigenvector of $\mathcal A$ by observing
    \begin{equation*}
        \begin{bmatrix}
            \frac{1}{k+1} A & \frac{1}{\sqrt{n(k+1)}}\1 \\
            \frac{1}{\sqrt{n(k+1)}}\1^T & 0
        \end{bmatrix} \begin{bmatrix}
            \mathbf{x} \\
            0
        \end{bmatrix} \\
        = \frac{\theta}{k+1} \begin{bmatrix}
            \mathbf{x} \\ 0
        \end{bmatrix}.
    \end{equation*}
    For the (three) remaining eigenvalues of $\nadj$, let $b \in \R$ and
    consider
    \begin{align*}
        \begin{bmatrix}
            \frac{1}{k+1} A & \frac{1}{\sqrt{n(k+1)}} \1 \\
            \frac{1}{\sqrt{n(k+1)}} \1^T & 0
        \end{bmatrix} \begin{bmatrix}
            \1 \\
            b
        \end{bmatrix}
        &= \begin{bmatrix}
            \frac{1}{k+1} A \1 + \frac{b}{\sqrt{n(k+1)}} \1 \\
            \frac{1}{\sqrt{n(k+1)}} \1^T \1
        \end{bmatrix} \\
        &= \begin{bmatrix}
            \left(\frac{k}{k+1} + \frac{b}{\sqrt{n(k+1)}}\right) \1 \\
            \frac{n}{\sqrt{n(k+1)}}
        \end{bmatrix} \\
        &= \lambda \begin{bmatrix}
            \1 \\
            b
        \end{bmatrix}.
    \end{align*}
    This expression gives the following relations.
    \begin{align*}
        \lambda
        &= \frac{k}{k+1} + \frac{b}{\sqrt{n(k+1)}} & \lambda b
        &= \frac{n}{\sqrt{n(k+1)}}
    \end{align*}
    Solving for $b$ in the left equation then plugging the result into the
    right equation and simplifying yields the expression
    \begin{equation*}
        0 = \sqrt{n(k+1)}(\lambda-1)(\lambda + \frac{1}{k+1}).
    \end{equation*}
    Hence we have found the eigenvalues $\{1, -1/(k+1)\}$ for $\nadj$.

    To find the last eigenvector we will write $\nadj$ in a slightly more
    precise way. Let $A_1, A_2$ be the adjacency matrices for each of the
    $k$-regular components and let $n_1, n_2$ be the order of each
    respectively. Then we write
    \begin{equation*}
        \nadj = \begin{bmatrix}
            \frac{1}{k+1}A_1 & 0 &\frac{1}{\sqrt{n(k+1)}} \1_{n_1} \\
            0 & \frac{1}{k+1}A_2 & \frac{1}{\sqrt{n(k+1)}}\1_{n_2} \\
            \frac{1}{\sqrt{n(k+1)}} \1_{n_1}^T & \frac{1}{\sqrt{n(k+1)}}
            \1_{n_2}^T & 0
        \end{bmatrix}.
    \end{equation*}
    Now observe
    \begin{equation*}
        \begin{bmatrix}
            \frac{1}{k+1} A_1 & 0 & \frac{1}{\sqrt{n(k+1)}} \1_{n_1} \\
            0 & \frac{1}{k+1}A_2 & \frac{1}{\sqrt{n(k+1)}} \1_{n_2} \\
            \frac{1}{\sqrt{n(k+1)}} \1_{n_1}^T & \frac{1}{\sqrt{n(k+1)}}
            \1_{n_2}^T & 0
        \end{bmatrix} \begin{bmatrix}
            \1_{n_1} \\
            -\frac{n_1}{n_2} \1_{n_2} \\
            0
        \end{bmatrix} = \begin{bmatrix}
            \frac{k}{k+1} \1_{n_1} \\
            -\frac{n_1}{n_2} \frac{k}{k+1} \1_{n_2} \\
            0
        \end{bmatrix}
        = \frac{k}{k+1}\begin{bmatrix}
            \1_{n_1} \\
            -\frac{n_1}{n_2} \1_{n_2} \\
            0
        \end{bmatrix}
    \end{equation*}
    Thus as we have found $n+1$ eigenvectors, we have found all of the eigenvalues. Hence
    \begin{equation*}
        \sigma(\nadj) = \left\{
            1, \frac{-1}{k+1}, \frac{k}{k+1}, \frac{\theta_i}{k+1}
        \right\}, \text{ where $\theta_i$ ranges over all eigenvalues of
        $A_1, A_2$, with $\theta_i\neq k$.}
    \end{equation*}
    Therefore the normalized Laplacian matrix has eigenvalues
    \begin{equation*}
        \sigma(\nlapl) = \left\{
            0, 1+\frac{1}{k+1}, \frac{1}{k+1}, 1-\frac{\theta_i}{k+1}
        \right\}.
    \end{equation*}
    The second smallest eigenvalue is $1/(k+1)$ and we have the result.
\end{proof}

\begin{corollary}\label{cor:cutclique}
    Let $G$ be the join of an isolated vertex with two complete graphs
    of order $(n-1)/2$ for some odd $n\geq 3$. Then
    \begin{equation*}
        \max\{\lambda_2(G), \lambda_2(G^c)\} = \frac{2}{n-1}
    \end{equation*}
\end{corollary}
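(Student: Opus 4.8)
The plan is to read this off from Proposition \ref{thm:kregjoin} essentially immediately, the only work being the index bookkeeping. First I would identify the regularity parameter. Each $K_{(n-1)/2}$ is $\left(\frac{n-1}{2}-1\right)$-regular, so the disjoint union $H = K_{(n-1)/2}\sqcup K_{(n-1)/2}$ is a disconnected $k$-regular graph with $k = \frac{n-1}{2}-1 = \frac{n-3}{2}$; in particular $k+1 = \frac{n-1}{2}$. By hypothesis $G$ is the join of an isolated vertex with $H$, which is exactly the situation of Proposition \ref{thm:kregjoin}, so that proposition yields $\lambda_2(G) = \frac{1}{k+1} = \frac{2}{n-1}$.

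Next I would dispose of the complement. The apex vertex of $G$ is adjacent to every other vertex, so it becomes isolated in $G^c$; hence $G^c$ is disconnected and $\lambda_2(G^c) = 0$, which is precisely the observation opening the proof of Proposition \ref{thm:kregjoin}. Combining the two computations gives $\max\{\lambda_2(G), \lambda_2(G^c)\} = \max\bigl\{\tfrac{2}{n-1}, 0\bigr\} = \frac{2}{n-1}$, as claimed.

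There is essentially no genuine obstacle, since all the spectral content lives in Proposition \ref{thm:kregjoin}; the care required is only in translating the proposition's convention (regular graph of order $n$, whole graph of order $n+1$) to the corollary's convention (whole graph of order $n$, regular part of order $n-1$). Two small checks round things out. I would verify that $\frac{1}{k+1}$ really is the \emph{second-smallest} eigenvalue and not merely some eigenvalue: from the displayed spectrum $\sigma(\nlapl) = \{0,\ 1+\frac{1}{k+1},\ \frac{1}{k+1},\ 1-\frac{\theta_i}{k+1}\}$, the values $\theta_i$ are the adjacency eigenvalues of the cliques other than $k$, and since each $K_{(n-1)/2}$ has all such $\theta_i = -1$, those terms equal $1+\frac{1}{k+1}$, so $\frac{1}{k+1}$ is indeed the smallest positive eigenvalue. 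Finally I would confirm the degenerate endpoint $n=3$, where each clique is a single vertex, $G = P_3$, $k=0$, and the formula correctly reads $\lambda_2(G) = 1 = \frac{2}{n-1}$.
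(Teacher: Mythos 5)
Your proposal is correct and follows essentially the same route as the paper: substitute $k=\frac{n-1}{2}-1$ into Proposition \ref{thm:kregjoin} to get $\lambda_2(G)=\frac{1}{k+1}=\frac{2}{n-1}$, and observe that $G^c$ is disconnected so $\lambda_2(G^c)=0$. The extra checks you add (that $\frac{1}{k+1}$ is genuinely the second-smallest eigenvalue for cliques, and the $n=3$ endpoint) are sound but not needed beyond what the proposition already asserts.
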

\begin{proof}
    Note $G^c$ is disconnected so $\lambda_2(G^c) = 0$. Substituting $k = (n-1)/2 - 1$ into Theorem \ref{thm:kregjoin} gives
    \begin{equation*}
        \lambda_2(G)
        = \frac{1}{\frac{n-1}{2}-1 + 1}
        = \frac{2}{n-1}.
    \end{equation*}
\end{proof}

%NOTE: The assumption in the previous result that $n$ is odd is necessary to get the same structure as in Theorem \ref{thm:kregjoin}. I have not yet solved the case where $G_1$ is $k$-regular and $G_2$ is $\ell$-regular. For small graphs though, it seems like the even $n$ case gives a spread that is a little bigger than $2/(n-1)$.

\begin{proposition}\label{thm:sumNLaplregular}
    Let $G$ be a $k$-regular graph on $n$ vertices. Then
    \[
    \lambda_2(G) + \lambda_2(G^c) \geq \frac{1}{n-1}.
    \]
\end{proposition}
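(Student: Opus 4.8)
The plan is to exploit the fact that for a $k$-regular graph the normalized Laplacians of $G$ and $G^c$ are simultaneously diagonalizable, so that both $\lambda_2(G)$ and $\lambda_2(G^c)$ can be read off from the adjacency spectrum of $G$ alone. Write $k' = n-1-k$ for the common degree of $G^c$, and let $k = \theta_1 \geq \theta_2 \geq \cdots \geq \theta_n$ be the adjacency eigenvalues of $G$. Since $G$ is $k$-regular, $\nlapl(G) = I - \frac1k A$, so $\lambda_2(G) = 1 - \theta_2/k \geq 0$. For the complement I would use $A(G^c) = J - I - A$: on the subspace $\1^\perp$ the matrix $J$ vanishes, so every eigenvector $\vec x \perp \1$ of $A$ with eigenvalue $\theta$ satisfies $A(G^c)\vec x = (-1-\theta)\vec x$. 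Hence on $\1^\perp$ the eigenvalues of $\nlapl(G^c) = I - \frac{1}{k'}A(G^c)$ are $1 + \frac{1+\theta_i}{k'}$, and minimizing over $i \geq 2$ (achieved at the \emph{smallest} adjacency eigenvalue $\theta_n$) gives
\[
\lambda_2(G^c) = 1 + \frac{1+\theta_n}{k'}.
\]
Establishing these two closed forms is the conceptual core; after that the problem becomes purely algebraic.

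Next I would observe that $\lambda_2(G)+\lambda_2(G^c)$ is symmetric under interchanging $G$ and $G^c$ (which swaps $k \leftrightarrow k'$), so without loss of generality I may assume $k \leq k'$, i.e. $k \leq \frac{n-1}{2}$. In this regime I claim it suffices to discard the nonnegative term $\lambda_2(G)$ and prove the stronger statement $\lambda_2(G^c) \geq \frac{1}{n-1}$. Applying the universal bound $\theta_n \geq -k$ for the adjacency matrix of a $k$-regular graph yields $\lambda_2(G^c) \geq 1 + \frac{1-k}{k'}$, and the target inequality reduces, after clearing the positive denominators $k'$ and $k+k' = n-1$, to the elementary inequality $k'^2 \geq k(k-1)$, which holds whenever $k' \geq k \geq 1$.

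Finally I would dispose of the degenerate cases. The formula for $\lambda_2(G^c)$ requires $k' > 0$, i.e. $G \neq K_n$; but $G = K_n$ falls under $k > k'$ and is handled by the symmetric case (then $G^c$ is empty, $\lambda_2(G) = \frac{n}{n-1}$). The empty graph $k=0$ is immediate since $G^c = K_n$. It is worth noting that the derivation of $\lambda_2(G^c)$ only ever divides by $k'$, never by $k$, so the case $k=0$ requires no special spectral bookkeeping.

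The main obstacle I anticipate lies in the setup rather than the estimate: one must correctly identify that it is the \emph{smallest} adjacency eigenvalue $\theta_n$ (not the second largest) that controls $\lambda_2(G^c)$, and confirm that the resulting value is genuinely the second-smallest eigenvalue of $\nlapl(G^c)$ — in particular that it is positive, so that it is not accidentally the trivial eigenvalue $0$. This last point is automatic, since the algebra forces $\lambda_2(G^c) \geq \frac{1}{n-1} > 0$; equivalently, $G^c$ is connected throughout the regime $k \leq k'$, because $G^c$ disconnected would force $\theta_n = k-n$, incompatible with $\theta_n \geq -k$ once $k \leq \frac{n-1}{2}$. Once the two closed forms are pinned down and the reduction $k \leq k'$ is in place, the remaining inequality $k'^2 \geq k(k-1)$ is trivial.
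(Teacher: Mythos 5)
Your proof is correct, but it takes a genuinely different route from the paper's. The paper's argument is three lines: for a $k$-regular graph, $\lambda_2(G)=\tfrac{1}{k}\mu_2(G)$ and $\lambda_2(G^c)=\tfrac{1}{n-k-1}\mu_2(G^c)$, both denominators are at most $n-1$, and the recently resolved Laplacian spread inequality $\mu_2(G)+\mu_2(G^c)\ge 1$ finishes the job. You instead import nothing external: you diagonalize $\nlapl(G)$ and $\nlapl(G^c)$ simultaneously via $A(G^c)=J-I-A$ restricted to $\1^\perp$, extract the closed form $\lambda_2(G^c)=1+\frac{1+\theta_n}{n-1-k}$, and combine it with the trivial bound $\theta_n\ge -k$; your bookkeeping for the reduction to $k\le k'$ and for the degenerate cases $k=0$ and $G=K_n$ is sound, and the closed form remains valid even when $G^c$ is disconnected, since the minimum over $\1^\perp$ is then $0$. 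The paper's route buys brevity at the price of leaning on a deep theorem; yours buys a self-contained elementary argument that in fact proves more than the statement asks. Indeed, in the regime $k\le k'$ your estimate gives
\begin{equation*}
\lambda_2(G^c)\;\ge\;\frac{k'-k+1}{k'}\;=\;\frac{n-2k}{n-1-k},
\end{equation*}
and this quantity is decreasing in $k$, hence minimized at $k=\lfloor (n-1)/2\rfloor$, where it equals $\tfrac{2}{n-1}$ ($n$ odd) or $\tfrac{4}{n}$ ($n$ even). So your computation shows that the denser of $G,G^c$ alone already satisfies $\lambda_2\ge \tfrac{2}{n-1}$, i.e.\ it establishes the inequality of Conjecture \ref{conj:nLaplMaxBound} for all regular graphs --- a strengthening worth recording explicitly rather than discarding $\lambda_2(G)$ and stating only the sum bound.
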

\begin{proof}
    Recall the $\mu_2$ denotes the second smallest eigenvalue of the combinatorial Laplacian.  Since $G$ is $k$-regular we have 
    $\lambda_2(G) = \frac{1}{k}\mu_2(G)$ and  $\lambda_2(G^c) = \frac{1}{n-k-1}\mu_2(G^c).$
    
    Then since $\mu_2(G)+\mu_2(G^c)\geq1$,
    \begin{align*}
        \lambda_2(G) + \lambda_2(G^c) &=\frac{1}{k}\mu_2(G) + \frac{1}{n-k-1}\mu_2(G^c)\\
        &\geq \frac{1}{n-1}(\mu_2(G) + \mu_2(G^c))\\
        &\geq \frac{1}{n-1}.
    \end{align*}
\end{proof}

% vvvvvvvvvvv HIDDEN vvvvvvvvvvvvv
\Hidden{
\begin{proposition}\label{thm:sumNLaplregular}
    Let $G$ be a $k$-regular graph on $n$ vertices. Then
    \[
    \lambda_2(G) + \lambda_2(G^c) \geq \frac{2}{n-1}.
    \]
\end{proposition}
\begin{proof}
    Since $G$ is $k$-regular we have
    \begin{align*}
        \lambda_2(G) &= \frac{1}{k}\mu_2(G) &  \lambda_2(G^c) &= \frac{1}{n-k-1}\mu_2(G^c)
    \end{align*}
    where $\mu_2$ denotes the second smallest eigenvalue of the combinatorial Laplacian.
    Notice then that
    \begin{align*}
        \lambda_2(G) + \lambda_2(G^c) \geq \frac{2}{n-1} &&\iff&& \frac{1}{k}\mu_2(G) + \frac{1}{n-k-1}\mu_2(G^c) \geq \frac{2}{n-1}.
    \end{align*}
    We will show that the inequality on the right holds by showing the following stronger result.
    \[
    \frac{1}{k}\mu_2(G) + \frac{1}{n-k-1}\mu_2(G^c) \geq \frac{2}{n-1}(\mu_2(G) + \mu_2(G^c)).
    \]
    This is equivalent to showing that
    \begin{equation}\label{eq:kregbigsum}
    \frac{n-1}{\mu_2(G) + \mu_2(G^c)}\left(\frac{\mu_2(G)(n-k-1) + \mu_2(G^c)k}{k(n-k-1)}\right)\geq 2.
    \end{equation}
    Without loss of generality, suppose that $k\leq n-k-1.$ Hence $k\leq (n-1)/2$. Then the left side of (\ref{eq:kregbigsum}) becomes
    \begin{align*}
        \frac{n-1}{\mu_2(G) + \mu_2(G^c)}\left(\frac{\mu_2(G)(n-k-1) + \mu_2(G^c)k}{k(n-k-1)}\right) &\geq \frac{n-1}{\mu_2(G) + \mu_2(G^c)}\left(\frac{\mu_2(G)k + \mu_2(G^c)k}{k(n-k-1)}\right)\\
        &=\frac{n-1}{n-k-1}\\
        &\geq \frac{n-1}{\frac{n-1}{2}}\\
        &= 2.
    \end{align*}
    Thus the inequality in (\ref{eq:kregbigsum}) is true and so the desired result holds.
\end{proof}
}
% ^^^^^^^^^^^ HIDDEN ^^^^^^^^^^^^^
%\textcolor{red}{I think I have proofs of $\lambda_2(G) + \lambda_2(G^c)\geq \frac{2}{n-1}$ if $G$ is $k$-regular and $G^c$ is $k$-regular. Also if $G$ is disconnected. However, I don't have an improvement in general yet so I don't know if that's worth putting in}

%\section{Conclusion}\label{sec:conclusion}
%We end with some open questions.
%\begin{itemize}
%    \item For weighted graphs, do these bounds hold?
%    \item For what graphs do we achieve equality?
%    \item If $G$ and $G^c$ are both connected, can we get a stronger bound?
%\end{itemize}   
%We discuss these last two questions in more detail below.
    %\subsection{Weighted Graphs}
        %Looking at the weighted star (include plot/drawing), the $h(G) = 1$
        %for all nonzero values of $t$, but $h(G^c)$ is increasing with values
        %in $[0.5, 0.7]$. The second-smallest eigenvalue of the normalized
        %Laplacian $\mu(G)$ remains constant for nonzero $t$, but $\mu_2(G^c)$
        %is increasing in $t$ and $\mu_n(G)$ is decreasing.

        %\begin{itemize}
            %\item Show that allowing weights in $[0, 1]$ creates points on
             %   the plots inside the square (e.g. with small $\mu_2(G)$ and
              %  small $\mu_2(G^c)$ (find an example).
            %\item Consider barbells with a weighted edge?
            %\item \textcolor{red}{I've looked at barbells with a weighted edge, the same but with empty graphs instead of cliques, and the butterfly or generalized bowtie graphs. None of these families and weights as we discussed get us inside the box for $\lambda_2$ or for $h(G)$.}
        %\end{itemize}

\section{Characterizing graphs that achieve the inequalities}\label{sec:Characterization}
For the Laplacian spread, a full characterization is known of graphs that achieve $\mu_2(G) + \mu_2(G^c) = 1$. We leave a full characterization of graph and graph complement pairs achieving the bounds given for the isoperimetric number and Cheeger constant as an open problem. We do, however, give a partial characterization and some intuition for what might be expected.

This next result helps to give a partial characterization of what graphs achieve $\max\{i(G), i(G^c)\} = 1$. It also will help to give a partial characterization of graphs achieving $\max\{h(G), h(G^c)\} = \frac{2}{n}$. Notice that the graphs described in Lemma \ref{lem:GdisconnectedAndIsoGc=1} are exactly the graphs that achieve equality for the Laplacian spread. 
    
\begin{lemma}\label{lem:GdisconnectedAndIsoGc=1}
        If $G$ is disconnected, $n\geq 5$, and $i(G^c) = 1$ then $G^c$ has a dominating vertex $v$ such that $G^c\setminus v$ is disconnected. %\textcolor{red}{$K_{2,2}$ is an example on 4 vertices of a graph that doesn't have this structure but has $i(G^c) = 1$. In fact I believe that is the only example on $n=4$. On $n=3, 2$ I believe any graph complement of a disconnected graph will have a dominating vertex and so that may be the only graph that does not have this structure.}
\end{lemma}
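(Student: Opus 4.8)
The plan is to extract the structure of $G^c$ from a single set that witnesses $i(G^c)=1$. Fix $X\subset V$ with $|X|\le n/2$ and $i_{G^c}(X)=1$, so that $|\partial_{G^c}(X)|=|X|$. Let $G_1,\dots,G_r$ with $r\ge 2$ be the components of $G$, and write $n_j=|V(G_j)|$, $x_j=|X\cap V(G_j)|$, $\beta_j=|\bar X\cap V(G_j)|=n_j-x_j$, $S=|X|=\sum_j x_j$, and $T=|\bar X|=\sum_j\beta_j$. Because $G$ is disconnected, every pair of vertices lying in distinct components of $G$ is an edge of $G^c$; counting only these cross-component edges of the boundary gives
\[
|\partial_{G^c}(X)|\ \ge\ \sum_j x_j\,|\bar X\setminus V(G_j)|\ =\ \sum_j x_j(T-\beta_j)\ =\ TS-\sum_j x_j\beta_j .
\]
Setting this against $|\partial_{G^c}(X)|=S$ produces the key inequality $\sum_j x_j\beta_j\ge S(T-1)$. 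I will also use that the leftover quantity $S-\sum_j x_j(T-\beta_j)$ counts exactly the boundary edges lying inside components of $G$, and hence is nonnegative; its vanishing will be what forces a join.

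Next I would show the key inequality forces $\bar X$ to lie almost entirely inside one component. Since $\sum_j x_j\beta_j\le(\max_j\beta_j)\sum_j x_j=\beta_{\max}S$, the key inequality yields $\beta_{\max}\ge T-1$; relabel so that $\beta_1\ge T-1$, i.e. at most one vertex of $\bar X$ lies outside $G_1$. Note also $T\ge S$ because $|X|\le n/2$. I then split on the number of components that $X$ meets. If $X$ meets only one component $G_{j_0}$, the cross-component bound reads $|\partial_{G^c}(X)|\ge S(n-n_{j_0})$, so $|\partial_{G^c}(X)|=S$ forces $n-n_{j_0}=1$; thus $G$ consists of $G_{j_0}$ together with a single isolated vertex, the leftover boundary count is $0$, and there are therefore no $G^c$-edges inside $G_{j_0}$ between $X$ and its complement, i.e. $G_{j_0}=X\vee(V(G_{j_0})\setminus X)$ is a join with both parts nonempty (as $|X|\le n/2<n-1=n_{j_0}$). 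If instead $X$ meets at least two components, I substitute $\beta_1\ge T-1$ into the key inequality: the subcase $\beta_1=T$ again forces exactly one vertex outside $G_1$ (using $T\ge S$) and recovers the same ``isolated vertex plus join'' picture, while the subcase $\beta_1=T-1$ collapses to $T\le 2$, hence $n\le 4$, contradicting $n\ge 5$.

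In each surviving configuration $G$ is the disjoint union of one isolated vertex $v$ and a join $A\vee B$ on the remaining $n-1$ vertices. Passing to the complement, $v$ is adjacent to every other vertex, so $v$ is a dominating vertex of $G^c$, and $G^c\setminus v=(A\vee B)^c=A^c\sqcup B^c$ is disconnected, which is exactly the claim. I expect the main obstacle to be the bookkeeping in the case analysis: verifying that the equality $|\partial_{G^c}(X)|=|X|$ forces the inside-component boundary to vanish, so that the join is genuinely present and not merely a subgraph, and confirming that every degenerate configuration truly requires $n\le 4$, so that the hypothesis $n\ge 5$ is precisely what eliminates them. Lemma \ref{lem:disconnected} is the qualitative input standing behind all of these estimates, and the analysis above is essentially its sharpening to the equality case.
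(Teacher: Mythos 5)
Your proposal is correct, and it runs on the same engine as the paper's argument: every pair of vertices in distinct components of $G$ is forced to be an edge of $G^c$, these forced cross-component pairs already account for at least $|X|$ boundary edges, and the equality case $|\partial_{G^c}(X)|=|X|$ is then squeezed until the ``dominating vertex plus disconnection'' structure pops out. The difference is in how the squeeze is organized, and it is a genuine difference of decomposition. The paper lumps the components of $G$ into two blocks $C_1,C_2$, writes $|\partial_{G^c}(X)|$ as the forced cross-block count plus a remainder $r\ge0$, and runs three cases according to whether $X$ saturates a block, avoids a block, or does neither; the arithmetic forcing ``one exceptional vertex'' is redone separately inside each case. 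You instead keep the full component decomposition $G_1,\dots,G_r$, extract the single global inequality $\sum_j x_j\beta_j\ge S(T-1)$, and use $\sum_j x_j\beta_j\le\beta_{\max}S$ to conclude at the outset that all but at most one vertex of $\bar X$ sits in one component; the subsequent split (whether $X$ meets one component or several, and whether $\beta_1$ equals $T$ or $T-1$) is then roughly parallel to the paper's three cases, with your degenerate subcase $\beta_1=T-1$ collapsing to $n\le4$ just as the paper's Case~1 does. Your localization step buys a cleaner, more uniform reduction; the paper's two-block version buys direct continuity with the notation and estimates already set up in Lemma~\ref{lem:disconnected}. The one place you should be explicit when writing this up is the subcase $\beta_1=T$: there $x_1\ge S-S/T$ together with $x_1\le S-1$ must be used to force \emph{both} $x_1=S-1$ and $S=T$, so that the cross-component count $T(S-x_1)=S$ exactly exhausts the boundary and the within-component boundary vanishes --- that vanishing is what upgrades the join from ``present as a subgraph'' to an actual join, and hence makes $G^c\setminus v$ genuinely disconnected.
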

\begin{proof}
We use much of the same notation as in the proof of Lemma \ref{lem:disconnected}. 

Let $C_1,C_2$ be two components of $G$, that is $C_1\cup C_2 = V(G)$, $C_1\cap C_2 = \emptyset$, and $e_G(C_1, C_2) = \emptyset$. Let $X\subset V(G)$ with $|X| \leq \frac{n}{2}$. Let $X_1=X\cap C_1$ and $X_2=X\cap C_2$. As before, we also may assume that a set $X\subset V(G)$ such that $i_{G^c}(X) = i(G^c)$ has $|X|\leq \frac{n}{2}$.

Note that
\[
|\d_{G^c}(X)| = |X_1|(|C_2| - |X_2|) + |X_2|(|C_1| - |X_1|) + r
\]
where $r = e_{G^c}(X_1, C_1\setminus X_1) + e_{G^c}(X_2, C_2\setminus X_2)\geq 0$ is an integer.

\textbf{Case 1.} Suppose that $|X_1|\neq |C_1|$, $|X_2|\neq |C_2|$,  $|X_1|, |X_2| > 0$, and $i_{G^c}(X) = 1$. Hence we have
\begin{align*}
    i_{G^c}(X) = \frac{|X_1|(|C_2| - |X_2|) + |X_2|(|C_1| - |X_1|) + r}{|X_1| + |X_2|} &= 1\\
    |X_1|(|C_2| - |X_2|) + |X_2|(|C_1| - |X_1|) + r &= |X_1| + |X_2|.
\end{align*}
However, since $|X_1|\neq |C_1|$ and $|X_2|\neq |C_2|$ we see that the left hand side above satisfies
\[
|X_1|(|C_2| - |X_2|) + |X_2|(|C_1| - |X_1|) + r \geq |X_1| + |X_2| + r \geq |X_1| + |X_2|.
\]
Thus we can achieve equality if and only if $|C_1| - |X_1| = |C_2| - |X_2| = 1$ and $r=0$. This requires $n - |X| = 2$. But since $|X|\leq \frac{n}{2}$ this also requires $n\leq 4$. 
%LOOK AT THESE SMALL GRAPHS CLOSER LATER BUT FOR NOW OK.

\textbf{Case 2.} Suppose $|X_1|, |X_2| > 0$, $|X_2| = |C_2|$, and $i_{G^c}(X) = 1.$ In this case we have
\begin{align*}
    i_{G^c}(X) = \frac{|X_2||C_1| - |X_1||X_2| + r}{|X_1| + |X_2|} &= 1\\
    |X_2||C_1| - |X_1||X_2| + r &= |X_1| + |X_2|\\
    |C_1| &= \frac{|X_1| - r}{|X_2|} + 1 + |X_1|.
\end{align*}
Note here that as $|C_1|\in\Z$ we must have $|X_1| - r = k|X_2|$ for some $k\in\Z$. Also at this point we exchange $|X_2|$ for $|C_2|$ as $|X_2| = |C_2|$. Then we have
\begin{align*}
n-|C_2| &= \frac{k|C_2|}{|C_2|} + 1 + k|C_2| + r\\
n-k-1 &= |C_2|(k+1) + r.
\end{align*}
Notice that $|C_2|(k+1) + r = |X_1| + |X_2| \leq \frac{n}{2}$. Combining this with the equation just above we see that
\begin{equation}
    n-k-1 \leq \frac{n}{2}.
\end{equation}
Hence
\begin{equation}
    \frac{n}{2} - 1\leq k.
\end{equation}
We now upper bound $k$. Recall that $|X_1| - r = k|X_2|$. Hence $|X_1| = k|X_2| + r.$ Then
\begin{align*}
    |X_1| + |X_2| &\leq \frac{n}{2}\\
    (k+1)|X_2| + r &\leq \frac{n}{2}\\
    k &\leq \frac{n}{2|X_2|} - \frac{r}{|X_2|} - 1.
\end{align*}
Therefore we have
\begin{equation*}
    \frac{n}{2} - 1 \leq k \leq \frac{n}{2|X_2|} - \frac{r}{|X_2|} - 1.
\end{equation*}
This requires $|X_2| =1$ and $r = 0$. Thus since $|X_1| - r = k|X_2|$ we have $|X_1| = \frac{n}{2} - 1$. Thus $G^c$ has $X_2$ as a dominating vertex as $X_2$ is connected to all $n-1$ vertices in $C_1$. Also $e_{G^c}(X_1, \bar X) = 0$ since $r = 0$.

\textbf{Case 3.} Suppose without loss of generality that $|X_2| = 0.$ Then we have
\begin{align*}
    i_{G^c}(X) = \frac{|X_1||C_2| + r}{|X_1|} &= 1\\
    |C_2| + \frac{r}{|X_1|} &= 1.
\end{align*}
However, $|C_2|\geq 1$ so for the equality to hold we must have $|C_2| = 1$ and $r=0$. Therefore as $G^c$ has every edge from $C_1$ to $C_2$ and $|C_2| = 1$, $G^c$ has $C_2$ as a dominating vertex. Also, since $r=0$ it follows that $e_{G^c}(X_1, C_1\setminus X) = 0$.
\end{proof}
We remark that we can check all graphs of order less than 5 and find that $G^c = C_4$ is the only graph such that $G$ is disconnected, $i(G^c) = 1$, but $G^c$ does not have the structure described in Lemma \ref{lem:GdisconnectedAndIsoGc=1}.

\begin{figure}[h]
        \centering
        \includegraphics[scale=0.75]{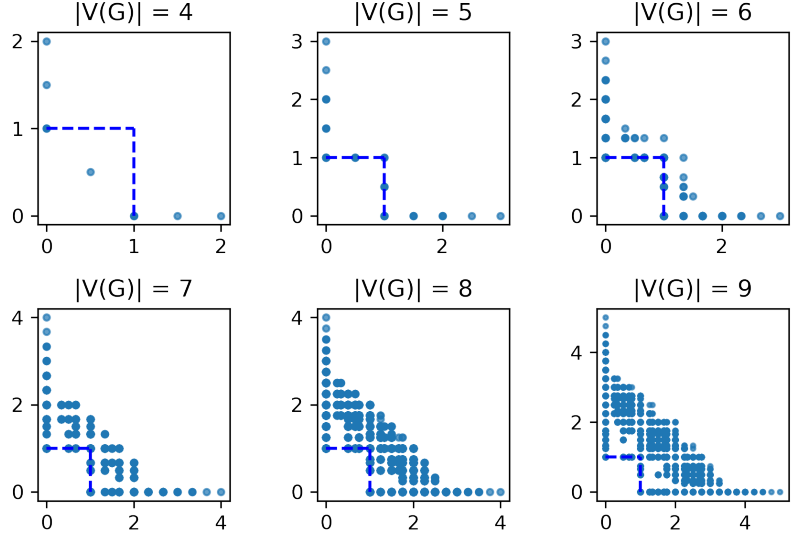}
        \caption{Plots of $(i(G), i(G^c))$ for all graphs of given order. The lines trace out the box $x = 1$ and $y=1$.}
        \label{fig:isoPlots}
    \end{figure}
Judging based off of the plots in Figure \ref{fig:isoPlots} for the isoperimetric number, it seems like $\max\{i(G), i(G^c)\}\geq 1$ is the ``correct'' bound in that it is sharp for many graph pairs $G, G^c$ regardless of if $G$ or $G^c$ is connected or not. 
%\textcolor{red}{Consider looking at what graphs achieve this??}

However, plots of this sort for the Cheeger constant tell a different story. While the lower bound found for the Cheeger constant is seen as sharp for the plots in Figure \ref{fig:cheegerPlots} with $|V(G)|\geq 5$, it seems like a stronger bound could be found when both $G$ and $G^c$ are connected.

\begin{figure}[h]
    \centering
    \includegraphics[scale=0.4]{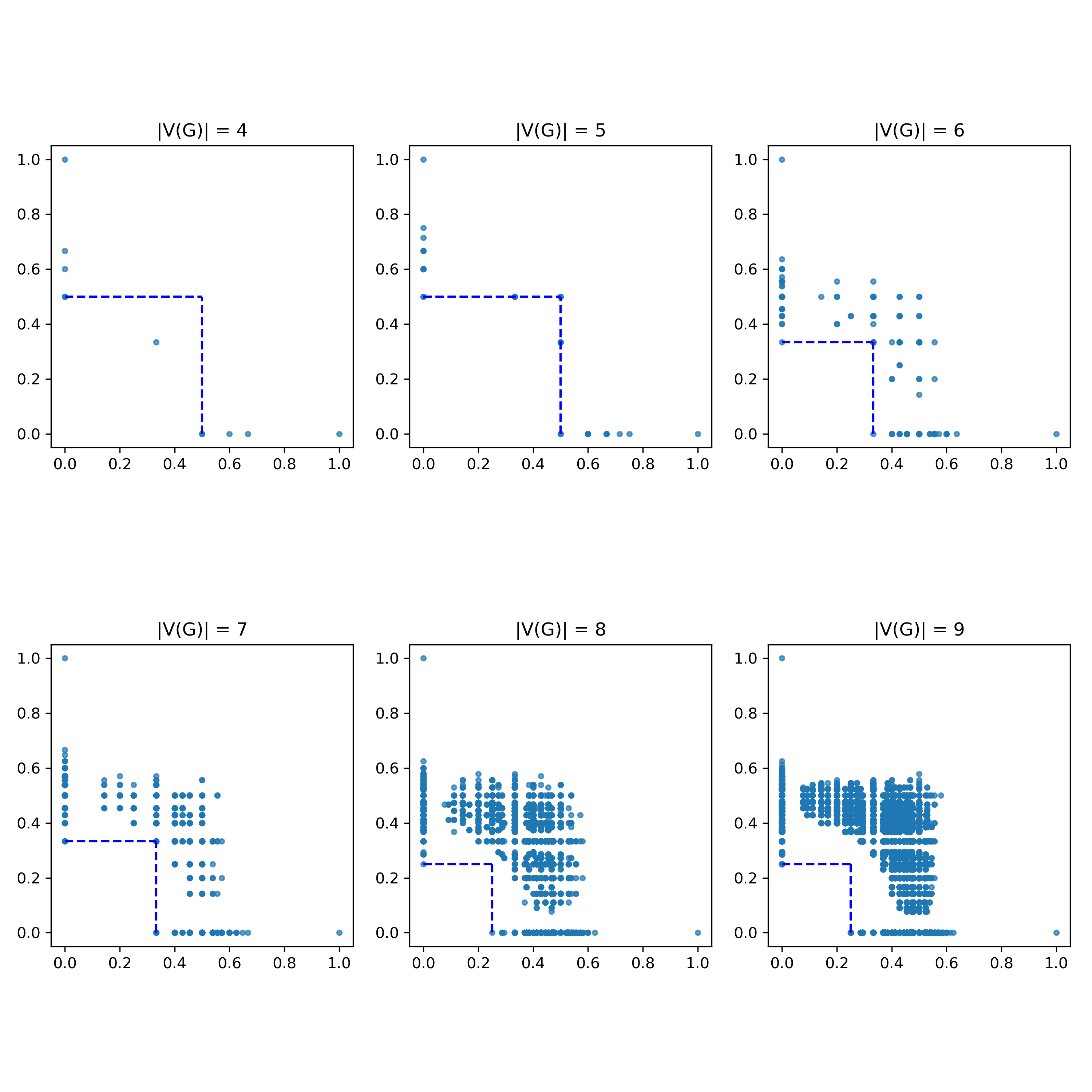}
    \caption{Scatterplots of $(h(G), h(G^c))$ for all graphs of given order. Also the box with $x = \frac{1}{\lfloor\frac{n}{2}\rfloor}$ and $y = \frac{1}{\lfloor\frac{n}{2}\rfloor}$.}
    \label{fig:cheegerPlots}
\end{figure}

 It is interesting to note that for graphs of order $n=8$ and $n=9$, the only graphs that achieve the property that $\max\{h(G), h(G^c)\} = \frac{1}{\floor{\frac{n}{2}}}$ are graphs where one of $G$ or $G^c$ is the join of an isolated vertex and a graph of order $n-1$. There are 9 pairs of graphs of order 9 that achieve this. On 8 vertices there is only a single pair of graphs that achieve this. This will be further explained by Theorem \ref{thm:cheegerCharacterGdisconnected}.

%It is also of interest to characterize which pairs of graphs $G, G^c$ achieve $\max\{h(G), h(G^c)\} = \frac{2}{n}$ 
%\textcolor{red}{We should alter as necessary to get the characterization theorem with as much info as possible}. 
These next two results help us to give a partial characterization. In particular, we will characterize graphs that achieve $\max\{h(G), h(G^c)\} = \frac{1}{\left\lfloor\frac n2\right\rfloor}$ given that $G$ is disconnected. Theorem \ref{thm:cheegerCharacterGdisconnected} states that there is only one pair of graphs $G, G^c$ that achieve this if $n$ is even.

\begin{lemma}\label{lem:bigisobigcheeger}
        If $i_G(X) > 1$ then $h_G(X) > \frac{1}{\left\lfloor\frac{n}{2}\right\rfloor}$.
\end{lemma}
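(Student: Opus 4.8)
The plan is to prove the contrapositive: I would show that $h_G(X) \le 1/\lfloor n/2\rfloor$ forces $i_G(X) \le 1$, which is exactly equivalent to the stated implication $i_G(X) > 1 \Rightarrow h_G(X) > 1/\lfloor n/2\rfloor$. In this form the result is the non-strict, ``closed'' companion of Lemma~\ref{lem:isogivescheeger}, and the same computation should carry it through almost verbatim. Since both $h_G$ and $i_G$ are invariant under $X \mapsto \bar X$, I would first assume without loss of generality that $\vol_G(X) \le \vol_G(\bar X)$, so that $h_G(X) = |\partial_G(X)|/\vol_G(X)$, and then split on whether $|X| \le |\bar X|$ or $|\bar X| \le |X|$, mirroring the two cases of Lemma~\ref{lem:isogivescheeger}.

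The one structural input is the handshake-type bound $\vol_G(X) \le |X|\bigl(i_G(X) + |X| - 1\bigr)$, which holds because the edges leaving $X$ number $|\partial_G(X)| = |X|\,i_G(X)$ while each vertex of $X$ has at most $|X|-1$ neighbours inside $X$; in the other case one uses the analogous bound $\vol_G(\bar X) \le |\partial_G(X)| + |\bar X|(|\bar X|-1)$. Feeding $h_G(X) \le 2/n$ (for even $n$) into $i_G(X) = h_G(X)\,\vol_G(X)/\min\{|X|,|\bar X|\}$ and applying these volume bounds, the chain of inequalities in Lemma~\ref{lem:isogivescheeger} collapses, in each case, to an inequality of the form $c\,i_G(X) \le c$ with $c > 0$ (namely $c = \tfrac{n-2}{n}$ in the first case and $c = \tfrac n2 - 1$ in the second), whence $i_G(X) \le 1$. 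For odd $n$ I would replace $n/2$ by $(n-1)/2$ throughout and use that the smaller part then has at most $(n-1)/2$ vertices, exactly as in the parity remark closing Lemma~\ref{lem:isogivescheeger}.

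The only point that genuinely needs care is the bookkeeping of strict versus non-strict inequalities. Lemma~\ref{lem:isogivescheeger} inherits strictness for free from its strict hypothesis, whereas here I start from the non-strict $h_G(X) \le 1/\lfloor n/2\rfloor$ and must verify that every step stays non-strict, so that the conclusion is the non-strict $i_G(X) \le 1$; since the volume bounds above are themselves non-strict, this goes through cleanly. The residual nuisance is small $n$, where the multiplier $c$ degenerates (for instance $c = 0$ when $n = 2$, or the odd analogue $\tfrac{n-3}{n-1}$ at $n = 3$); those few cases $n \le 4$ I would simply check directly, consistent with the path-on-four-vertices exception already flagged in the surrounding theorems. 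I do not anticipate a real obstacle, as the argument is a direct re-run of Lemma~\ref{lem:isogivescheeger} with the inequalities read in the closed direction.
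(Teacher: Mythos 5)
Your proposal is correct and is essentially the paper's own argument: the paper proves the lemma directly from $i_G(X)>1$ using exactly the same two ingredients you name (the bound $\vol_G(X)\le |X|\,(i_G(X)+|X|-1)$, equivalently $2e_G(X,X)\le |X|(|X|-1)$, together with $|X|\le\lfloor n/2\rfloor$, and the analogous bound on $\vol_G(\bar X)$ in the second case), so running Lemma~\ref{lem:isogivescheeger} in the contrapositive, non-strict direction as you describe is the same computation read backwards. Your attention to the strict/non-strict bookkeeping and to the degenerate small-$n$ multipliers is, if anything, more careful than the paper's own treatment, which applies the lemma only for $n\ge 5$.
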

\begin{proof}
Suppose without loss of generality that $|X|\leq \frac{n}{2}$. We may do this as $h(X) = h(\bar X).$ Suppose that $i(X) > 1$.

\textbf{Case 1.}
Suppose that $\vol_G(X)\leq\vol_G(\bar X).$ Note that $|\d_G(X)| = |X|i(X)$, $\vol_G(X) = 2e_G(X, X) + |\d_G(X)|$, and $2e_G(X, X) \leq |X|(|X| - 1)$. Then since $i_G(X) > 1$
\begin{align*}
    i_G(X)|X|\left(\frac{n}{2}-1\right) &> 2e_G(X, X)\\
    i_G(X)|X|\frac{n}{2} &> 2e_G(X, X) + i_G(X)|X|\\
    |\d_G(X)|\frac{n}{2} &> \vol_G(X)\\
    \frac{|\d_G(X)|}{\vol_G(X)} &> \frac{2}{n}\\
    h_G(X) &> \frac{2}{n}.
\end{align*}

\textbf{Case 2.}
Suppose $\vol_G(\bar X) \leq \vol_G(X).$ By the above work we already have that %\textcolor{red}{Reading through this I might have made a mistake in some logic here at the first part of this case. Work through more carefully again. Nevermind we're good}
\[
|\d_G(X)|\frac{n}{2} > \vol_G(X) \geq \vol_G(\bar X).
\]
Thus we get
\begin{align*}
    \frac{|\d_G(X)|}{\vol_G(\bar X)} &> \frac{2}{n}\\
    h_G(X) &> \frac{2}{n}.
\end{align*}
Note that in both of these cases if we assumed that $n$ is odd, we could replace $|X|\leq \tfrac{n}{2}$ with $|X|\leq\tfrac{n-1}{2}$ and then nearly identical work would give the desired result.
\end{proof}

\begin{theorem}\label{thm:cheegerCharacterGdisconnected}
%If $G$ is disconnected, $n\geq 5$, and $h(G^c) = \frac{1}{\left\lfloor\frac{n}{2}\right\rfloor}$, then  $G^c$ is the join of an isolated vertex with two cliques of sizes  $\left\lfloor\frac{n-1}{2}\right\rfloor$ and $\left\lceil\frac{n-1}{2}\right\rceil$. \textcolor{red}{Wording?? Also, it's fixed in the proof (shy of deciding the nicest way to say what we want) but this above characterization is not entirely accurate (though I believe it will be for $n$ even)}

Let $G$ be disconnected, $n\geq 5$, and $h(G^c) = \frac{1}{\left\lfloor\frac{n}{2}\right\rfloor}$. If $n$ is even then $G^c$ is the join of two cliques of sizes $\frac{n}{2}$ and $\frac{n}{2}-1$. If $n$ is odd then $G^c$ is the join of an isolated vertex with a disconnected graph which is a clique on $\frac{n-1}{2}$ vertices and a graph $H$ on $\frac{n-1}{2}$ vertices with $\vol(H) \geq ((n-1)/2)((n-1)/2 - 3)$.
\end{theorem}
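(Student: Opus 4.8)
The plan is to combine the isoperimetric structure theorem with a tight volume count at the optimal Cheeger cut. Throughout write $m=\floor{n/2}$.

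First I would reduce to the join structure. Let $X^\ast$ attain $h(G^c)=h_{G^c}(X^\ast)=1/m$. Since $h_{G^c}(X^\ast)$ is not strictly greater than $1/m$, the contrapositive of Lemma~\ref{lem:bigisobigcheeger} gives $i_{G^c}(X^\ast)\le 1$, while Lemma~\ref{lem:disconnected} (as $G$ is disconnected) gives $i(G^c)\ge 1$; hence $i(G^c)=1$ and $i_{G^c}(X^\ast)=1$. With $n\ge 5$, Lemma~\ref{lem:GdisconnectedAndIsoGc=1} then supplies a dominating vertex $v$ of $G^c$ with $G^c\setminus v$ disconnected, so $G^c=K_1+H$ where $H=G^c\setminus v$ has $n-1$ vertices and at least two components.

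Next I would identify the optimal cut. Replacing $X^\ast$ by its complement if necessary (the Cheeger ratio is unchanged), I may assume the optimizing set $X$ avoids $v$, so $X\subseteq V(H)$ with $h_{G^c}(X)=1/m$ and $i_{G^c}(X)=1$. Since $v$ is adjacent to all of $X$, $\abs{\d_{G^c}(X)}=\abs{X}+\abs{\d_H(X)}\ge\abs{X}$, where $\d_H(X)$ denotes the $H$-edges leaving $X$; then $i_{G^c}(X)=1$ forces $\abs{X}\le n/2$ (so $\abs{X}\le m$) and $\abs{\d_H(X)}=0$, i.e.\ $X$ is a union of components of $H$ and $\abs{\d_{G^c}(X)}=\abs{X}$. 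Writing $s=\abs{X}$ and $e_X$ for the edges inside $X$, we have $\vol_{G^c}(X)=2e_X+s$. The possibility $\vol_{G^c}(X)>\vol_{G^c}(\bar X)$ is impossible: it would give $\vol_{G^c}(\bar X)=sm$ and hence $2e_X+s>sm$, i.e.\ $e_X>s(m-1)/2$, contradicting $e_X\le\binom s2$ once $s\le m$. Thus $\min\{\vol_{G^c}(X),\vol_{G^c}(\bar X)\}=\vol_{G^c}(X)$, so $1/m=s/(2e_X+s)$ yields $e_X=s(m-1)/2$; comparing again with $e_X\le\binom s2$ forces $s=m$ and $e_X=\binom m2$. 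Hence the optimal cut $X$ is a clique $K_m$ forming a single component of $H$.

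Finally I would pin down the complementary part $Y=V(H)\setminus X$, a union of the remaining components with $\abs{Y}=n-1-m$. From $\vol_{G^c}(\bar X)\ge\vol_{G^c}(X)=m^2$ and $\vol_{G^c}(\bar X)=(n-1)+2e_Y+\abs{Y}$ I obtain a lower bound on $e_Y$; for $n$ even ($m=n/2$, $\abs{Y}=n/2-1$) this bound is exactly $\binom{n/2-1}2$, the maximum possible on $n/2-1$ vertices, so $Y=K_{n/2-1}$ and $G^c=K_1+(K_{n/2}\cup K_{n/2-1})$, whereas for $n$ odd ($m=(n-1)/2$, $\abs{Y}=(n-1)/2$) it reads $\vol(Y)\ge \tfrac{n-1}2\bigl(\tfrac{n-1}2-3\bigr)$, which together with the clique $X=K_{(n-1)/2}$ is exactly the asserted description. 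I expect this volume bookkeeping to be the crux: requiring $h_{G^c}(X)=1/m$ forces the boundary to be as small as possible (only the $\abs{X}$ edges to $v$) and $\min\{\vol(X),\vol(\bar X)\}$ as large as possible at once, and it is the clash of these two extremal demands—rather than either alone—that collapses both sides to cliques in the even case and yields the volume threshold in the odd case. The delicate technical point is the location of $v$ relative to the cut, which is exactly why ruling out the ``wrong'' volume comparison, and thereby guaranteeing the minimizer is the clique rather than its complement, is essential.
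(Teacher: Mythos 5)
Your argument is correct and it rests on the same two pillars as the paper's proof: Lemma \ref{lem:bigisobigcheeger} (to upgrade $h(G^c)=1/\lfloor n/2\rfloor$ together with Lemma \ref{lem:disconnected} to $i(G^c)=1$ at the optimal cut) and Lemma \ref{lem:GdisconnectedAndIsoGc=1} (for the dominating vertex), followed by volume bookkeeping. The route through the middle is genuinely different in organization, though. The paper re-enters the case analysis \emph{inside} the proof of Lemma \ref{lem:GdisconnectedAndIsoGc=1} (its Cases 2 and 3, according to whether the optimal set contains the dominating vertex) and repeats the volume count separately for each case and for each choice of which side realizes the minimum volume. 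You instead use only the \emph{statement} of that lemma, normalize the optimizing set so that it avoids the dominating vertex $v$, and observe that $|\partial_{G^c}(X)|=|X|+|\partial_H(X)|$ together with $i_{G^c}(X)=1$ forces $X$ to be a union of components of $H=G^c\setminus v$ with $|X|\le \lfloor n/2\rfloor$; the single comparison $e_X=s(m-1)/2\le\binom{s}{2}$ then collapses all subcases at once to $X=K_m$, and your final bound on $e_Y$ reproduces the paper's conclusions in both parities (for $n$ even the bound meets $\binom{n/2-1}{2}$ exactly, forcing $Y$ to be a clique; for $n$ odd it is precisely the stated volume threshold). This is a cleaner, unified treatment of what the paper does in four separate strands. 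One remark: the structure you obtain for even $n$, namely $G^c=K_1\vee(K_{n/2}\sqcup K_{n/2-1})$, is exactly what the paper's proof establishes; the theorem's phrase ``join of two cliques'' should be read in that sense (an isolated vertex joined to the disjoint union of the two cliques), since a literal join of $K_{n/2}$ with $K_{n/2-1}$ would be a complete graph on $n-1$ vertices.
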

\begin{proof}
Note by Theorem \ref{thm:maxisoperimetric} that $i(G^c)\geq 1$. Further, by Lemma \ref{lem:bigisobigcheeger} we have that $i(G^c) = 1.$

We use the same notation as in Lemma \ref{lem:GdisconnectedAndIsoGc=1}. As seen in the proof of that Lemma, we have only two cases to consider (what we call ``Case 2" and ``Case 3" in that proof).

\textbf{Case 1. (Case 2 in Lemma \ref{lem:GdisconnectedAndIsoGc=1})} Suppose that $|X_2| = |C_2|$ and $|X_1|, |X_2| > 0.$ As seen in this case, equality only comes if $|X_1| = \frac{n}{2}-1$, $|X_2| = 1$, and $e_{G^c}(X_1, \bar X) = 0$. In particular, equality in this case can come only if $n$ is even. Hence we have
\begin{equation}\label{eq:GcCharacterization}
    h_{G^c}(X) = \frac{\frac{n}{2}}{\min\{\vol_{G^c}(X), \vol_{G^c}(\bar X)\}} = \frac{2}{n}.
\end{equation}
Suppose that $\vol_{G^c}(X)\leq \vol_{G^c}(\bar X).$ Then equation \ref{eq:GcCharacterization} becomes
\begin{align*}
    \frac{\frac{n}{2}}{\vol_{G^c}(X)} &= \frac{2}{n}\\
    \left(\frac{n}{2}\right)^2 &= \vol_{G^c}(X).
\end{align*}
Thus $X_1$ must be a clique on $\frac{n}{2} - 1$ vertices. Further, $\vol_{G^c}(X)\leq \vol_{G^c}(\bar X)$ requires that $\bar X$ is a clique on $\frac{n}{2}$ vertices.

Suppose instead that $\vol_{G^c}(\bar X)\leq \vol_{G^c}(X)$. Then equation (\ref{eq:GcCharacterization}) becomes
\[
\left(\frac{n}{2}\right)^2 = \vol_{G^c}(\bar X).
\]
Similar to before we see that $X_1$ is a clique on $\frac{n}{2}-1$ vertices and $\bar X$ is a clique on $\frac{n}{2}$ vertices. 
%\textcolor{red}{Did I forget to do the $n$ odd case for this one? Or is it not necessary? Read more closely. I think the proof in 4.1 ended up requiring $n$ to be even. Maybe make that more clear though}

\textbf{Case 2. (Case 3 in Lemma \ref{lem:GdisconnectedAndIsoGc=1})} Suppose that $|X_2| = 0$. This case also required that $|C_2| = 1$ and $e_{G^c}(X_1, C_1\setminus X) = 0.$ Thus $|\d_{G^c}(X)| = |X|$ and so for even $n$
\begin{equation}\label{eq:GcCharacterization2}
    h_{G^c}(X) = \frac{|X|}{\min\{\vol_{G^c}(X), \vol_{G^c}(\bar X)\}} = \frac{2}{n}.
\end{equation}
Suppose that $\vol_{G^c}(X) \leq \vol_{G^c}(\bar X).$ Then equation (\ref{eq:GcCharacterization2}) becomes
\begin{align*}
    \frac{|X|}{\vol_{G^c}(X)} &= \frac{2}{n}\\
    \frac{n}{2}|X| &= \vol_{G^c}(X).
\end{align*}
For this to be true, we must have $|X| = \frac{n}{2}$ and $X$ a clique. But $\vol_{G^c}(X) \leq \vol_{G^c}(\bar X)$ then requires $C_1\setminus X$ to be a clique on $\frac{n}{2}- 1$ vertices. 

Suppose now that $n$ is odd. Then similar to above we have
\[
\frac{n-1}{2}|X| = \vol_{G^c}(X).
\]
For this to be true, we must have $|X| = \frac{n-1}{2}$ and $X$ is a clique. But since $\vol_{G^c}(X) \leq \vol_{G^c}(\bar X)$ we have
\begin{align*}
    \vol_{G^c}(X) &\leq \vol_{G^c}(\bar X)\\
    \left(\frac{n-1}{2}\right)^2 &\leq n-1 + \vol_{G^c}(C_1\setminus X)\\
    \left(\frac{n-1}{2}\right)^2 &\leq n-1 + \frac{n-1}{2} + 2e_{G^c}(C_1\setminus X, C_1\setminus X)\\
    \left(\frac{n-1}{2}\right)\left(\frac{n-1}{2}-3\right) &\leq 2e_{G^c}(C_1\setminus X, C_1\setminus X).
\end{align*}
Thus we have the volume requirement in the Theorem statement.

Suppose instead that $\vol_{G^c}(\bar X)\leq \vol_{G^c}(X)$. Then for even $n$, equation (\ref{eq:GcCharacterization2}) becomes
\[
\frac{n}{2}|X| = \vol_{G^c}(\bar X)\leq \vol_{G^c}(X) = |X| + 2e_{G^c}(X, X).
\]
From this inequality we see that
\begin{align*}
    \frac{n}{2}|X| &\leq |X| + 2e_{G^c}(X, X)\\
    \left(\frac{n}{2} - 1\right)|X| &\leq 2e_{G^c}(X, X).
\end{align*}
But in our case, this is only possible if $|X| = \frac{n}{2}$ and $X$ is a clique. If $\vol_{G^c}(\bar X) < \vol_{G^c}(X)$ in this case then $h_{G^c}(X) > \frac{2}{n}$, but we have equality so we must have $\vol_{G^c}(X) = \vol_{G^c}(\bar X)$ and we see that $G^c$ must have the structure described above.

Now suppose that $n$ is odd. Similar to above we see the that
\[
\left(\frac{n-1}{2}\right)|X| = \vol_{G^c}(\bar X) \leq \vol_{G^c}(X) = |X| + 2e_{G^c}(X, X).
\]
From this as above we see that we must have $|X| = (n-1)/2$ and $X$ a clique. Further we see that
\begin{align*}
    \left(\frac{n-1}{2}\right)|X| &= |X| + 2e_{G^c}(\bar X, \bar X)\\
    |X|\left(\frac{n-1}{2}-1\right) &= 2e_{G^c}(\bar X, \bar X)\\
    \left(\frac{n-1}{2}\right)\left(\frac{n-1}{2}-1\right) &= 2e_{G^c}(\bar X, \bar X)\\
    \left(\frac{n-1}{2}\right)\left(\frac{n-1}{2}-3\right) &= 2e_{G^c}(C_1\setminus X, C_1\setminus X).
\end{align*}

%If $n$ is odd, replace $2/n$ with $2/(n-1)$. A similar argument then implies that $X$ is a clique on $(n-1)/2$ vertices and $C_1\setminus X$ is also a clique on $(n-1)/2$ vertices.)
\end{proof}

\section{Conclusion}
%Include: Plots with the lines. iso: box is good. Cheeger: line does better? Eigenvalues: line

While there exists families of graphs that achieve the bounds proven and conjectured thus far, the scatterplots of Figure \ref{fig:cheegerPlots} and Figure \ref{fig:evalplots} suggest that if both $G$ and $G^c$ are connected, a better bound could be found for $\max\{h(G), h(G^c)\}$ and $\max\{\lambda_2(G), \lambda_2(G^c)\}$. 

\begin{figure}[h]
    \centering
    \includegraphics[scale=0.5]{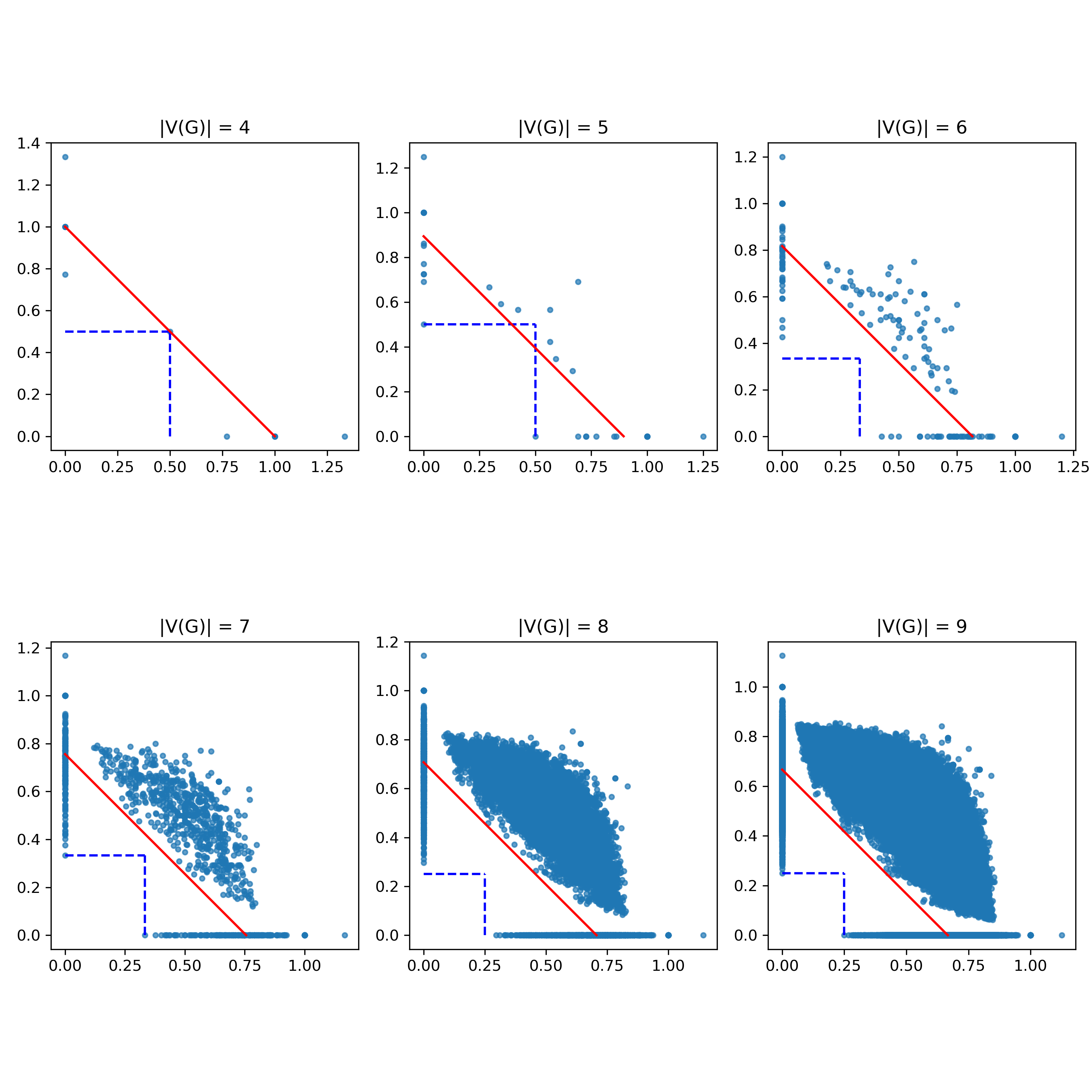}
    \caption{Plots of $(\lambda_2(G), \lambda_2(G^c))$, the box $x = \frac{1}{\left\lfloor \frac{n}{2}\right\rfloor}$, $y = \frac{1}{\left\lfloor \frac{n}{2}\right\rfloor}$, and the line $x + y = \frac{2}{\sqrt n}$}
    \label{fig:evalswithlines2overRootN}
\end{figure}
In Figure \ref{fig:evalswithlines2overRootN}, we have reproduced the plots of the ordered pairs $(\lambda_2(G),\lambda_2(G^c))$ for all graphs on $n$ vertices for $n=4,5,6,7,8,9$, and we have plotted the line $x+y=\frac{2}{\sqrt{n}}$.  We observe that if $G$ and $G^c$ are both connected, then it appears the the corresponding point lies above this line.  This motivates the following conjecture. 

\begin{conjecture}\label{conj:lapl_connected}
If $G$ and $G^c$ are both connected then \[\lambda_2(G) + \lambda_2(G^c)\geq \frac{2}{\sqrt{n}}.\]
\end{conjecture}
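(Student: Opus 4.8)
The plan is to deduce Conjecture \ref{conj:lapl_connected} from the Cheeger inequality (Lemma \ref{thm:ChungCheegerEigenvalue}) after first establishing a strengthened, quantitative isoperimetric statement valid \emph{only} when both $G$ and $G^c$ are connected. I would begin by observing that the bounds of Sections \ref{sec:Isoperimetric}--\ref{sec:Cheeger} cannot by themselves reach $2/\sqrt n$: Theorem \ref{thm:maxcheeger} gives $\max\{h(G),h(G^c)\}\ge 1/\lfloor n/2\rfloor$, which through $\lambda_2>h^2/2$ yields only a bound of order $1/n^2$, because Cheeger's inequality costs a square. Since $\lambda_2(G)+\lambda_2(G^c) > \tfrac12\big(h(G)^2+h(G^c)^2\big)$, it would suffice to show that when both graphs are connected one has $\max\{h(G),h(G^c)\}\ge c\,n^{-1/4}$ for an absolute constant $c$ (better, that both are at least this large). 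The first goal is therefore to upgrade Theorem \ref{thm:maxisoperimetric} from the scale $1$ to a quantitative both-connected statement at the unusual scale $n^{-1/4}$.

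The structural reason to expect this is the complementation identity used throughout Section \ref{sec:Isoperimetric}: a partition that is sparse for $G$ is automatically dense for $G^c$, so $G$ and $G^c$ cannot both be poorly connected along the \emph{same} partition. The difficulty, and the reason the both-connected hypothesis is essential, is that $G^c$ may instead have a sparse cut along a \emph{different} partition. Accordingly I would run a two-partition version of the contradiction argument in Theorem \ref{thm:maxisoperimetric}: assume $h_G(X)$ and $h_{G^c}(Y)$ are both small, set $a=|X\cap Y|$, $b=|X\cap Y^c|$, $c=|X^c\cap Y|$, $d=|X^c\cap Y^c|$ as there, and use that every pair across $X\cap Y^c$ and $X^c\cap Y$ (and across $X\cap Y$ and $X^c\cap Y^c$) contributes an edge to $\partial_G(X)\cup\partial_{G^c}(Y)$, forcing $|\partial_G(X)|+|\partial_{G^c}(Y)|\ge ad+bc$. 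The new feature relative to Section \ref{sec:Isoperimetric} is that \emph{volumes}, not set sizes, appear in the denominators, so one must convert this edge count into a simultaneous lower bound on $\vol_G(X)$ and $\vol_{G^c}(Y)$; balancing the two then pins the extremal configuration to a near-product block structure, which is what should produce the $\sqrt n$ scale. Carrying out this conversion cleanly is the main step.

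A second, more algebraic approach I would pursue in parallel uses the identity $L(G)+L(G^c)=nI-J$, which gives $f^\top L(G)f + f^\top L(G^c)f = n\|f\|^2$ for every $f$ with $\sum_i f_i=0$. One would like to feed the Fiedler vector of $G$ into the Rayleigh quotient for $G^c$, but here the central obstacle appears: the normalized Laplacian imposes the two \emph{different} orthogonality constraints $\sum_i d_i f_i=0$ (for $G$) and $\sum_i (n-1-d_i) f_i=0$ (for $G^c$), which coincide only when additionally $\sum_i f_i=0$, that is, essentially only for regular graphs. I would therefore split on the degree profile: for nearly regular $G$, sharpen Proposition \ref{thm:sumNLaplregular} (which currently yields only $1/(n-1)$) using $\mu_2(G^c)=n-\mu_n(G)$ together with bounds on the combinatorial spread $\mu_n(G)-\mu_2(G)$; for $G$ with large degree variance, exploit that coexisting high- and low-degree vertices force connectivity, and a controlled Rayleigh quotient, in $G^c$.

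I expect the genuine obstacle to be the same on both sides of this dichotomy. On the combinatorial side it is the quadratic loss in Lemma \ref{thm:ChungCheegerEigenvalue}, which places the required isoperimetric bound at the delicate scale $n^{-1/4}$ and forces a two-partition argument that tracks the interaction of a sparse cut of $G$ with the dense structure it induces in $G^c$; on the spectral side it is precisely the mismatch between the degree normalizations of $G$ and $G^c$, which is exactly what destroys the clean additive complement relation that makes the combinatorial Laplacian case, inequality (\ref{eq:comb_lapl}), so much easier. Because the near-extremal graphs appear to carry a two-dimensional, product-like geometry, I believe a successful proof must detect this two-partition structure rather than rely on any single cut.
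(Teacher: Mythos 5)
This statement is Conjecture~\ref{conj:lapl_connected}: the paper does not prove it. It is stated in the Conclusion as an open problem, motivated only by the numerical evidence of Figure~\ref{fig:evalswithlines2overRootN}, and the authors explicitly list proving it as future work. There is therefore no proof in the paper to compare yours against, and your proposal is not a proof either: it is a research plan whose decisive steps are left undone by your own account (``carrying out this conversion cleanly is the main step,'' the sharpening of Proposition~\ref{thm:sumNLaplregular}, the large-degree-variance case). Diagnosing obstacles, however accurately, does not establish the inequality.

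Beyond incompleteness, the first route has a quantitative problem you should confront before investing in it. From $\lambda_2(G)+\lambda_2(G^c)>\tfrac12\bigl(h(G)^2+h(G^c)^2\bigr)$, reaching $2/\sqrt n$ forces $\max\{h(G),h(G^c)\}\geq\sqrt2\,n^{-1/4}$, which is a strictly stronger open claim than the conjecture itself in the following sense: the reverse Cheeger bound only gives $h\geq\lambda_2/2$, so graphs with $\lambda_2(G)\approx\lambda_2(G^c)\approx n^{-1/2}$ could a priori have both Cheeger constants as small as order $n^{-1/2}$, in which case $\tfrac12(h^2+(h^c)^2)$ is of order $n^{-1}$ and the Cheeger route cannot close the gap no matter how the two-partition argument is executed. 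The quadratic loss in Lemma~\ref{thm:ChungCheegerEigenvalue} is exactly why the paper's own machinery stalls at $2/n^2$ in Theorem~\ref{thm:CheegertoEigenvalue}, and nothing in your sketch shows the extremal graphs for this conjecture live at the favorable $n^{-1/4}$ scale rather than the unfavorable $n^{-1/2}$ scale. Your second, spectral route correctly isolates the real difficulty (the incompatible orthogonality constraints $\sum_i d_i f_i=0$ and $\sum_i(n-1-d_i)f_i=0$), which is the same normalization mismatch the paper cites as the reason the combinatorial identity \eqref{eq:comb_lapl} does not transfer, but you do not resolve it; the regular case already follows from Proposition~\ref{thm:sumNLaplregular} only at the weaker scale $1/(n-1)$, and the irregular case is left entirely open.
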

%It would be interesting to prove this bound or an even stronger bound if possible either through a direct eigenvalue argument or by proving stronger results on $h(G), h(G^c)$ when $G$ and $G^c$ are both connected. 

We end with discussing some open questions and problems.  The most obvious open problems are to prove Conjecture \ref{conj:nLaplMaxBound} and Conjecture \ref{conj:lapl_connected}.  One could go further and examine the data the plots yield in Figure \ref{fig:evalswithlines2overRootN} and try to explain the various gaps, or the overall shape of the data there. 

Furthermore, looking at the plots in Figure \ref{fig:cheegerPlots} seems to suggest that for the Cheeger constant $h(G)$, a stronger result than Theorem \ref{thm:maxcheeger} could be obtained under the assumption that both $G$ and $G^c$ are connected (analogous to our Conjecture \ref{conj:lapl_connected}).  However, the data plotted in Figure \ref{fig:isoPlots} shows that many graphs achieve the bound $\max(i(G),i(G^c))=1$, so this can likely not be improved under any natural hypothesis.  Likewise, it would be of interest to give complete characterizations of when equality is achieved in Theorems \ref{thm:maxisoperimetric} and \ref{thm:maxcheeger}.  The characterizations from Section \ref{sec:Characterization} only resolve this question when one of $G$ and $G^c$ are disconnected.  Again, the data seem to indicate that this characterization is compete for the Cheeger constant, but not for the isoperimetric number.  

Finally, it could be of interest to examine the normalized Laplacian of weighted graphs.  Could allowing edge weights yield graphs where $\lambda_2(G)$ and $\lambda_2(G^c)$ fall below the $2/(n-1)$ bound of Conjecture \ref{conj:nLaplMaxBound}? This could make for interesting future work.

  %  \subsection{Connected Graphs with Connected Complement}
        % Let $\conn(n)$ denote the set of connected, undirected graphs of order $n$,
        % $\conn(n, m)$ the restriction of $\conn(n)$ to graphs with $m$ edges, and
        % $\conn_c$ the subset of $\conn$ of graphs with connected complement.

        % The following properties of $\conn_c$ are immediate
        % \begin{theorem}
        %     \begin{enumerate}
        %         \item For $G \in \conn_c(n)$, $n-1 \leq m(G) \leq \binom{n-1}{2}$.
        %         \item For $u \in V(G)$, $G \in \conn_c(n)$, $1 \leq \deg(u) \leq
        %               \min\{n-2, m-n+1\}$.
        %     \end{enumerate}
        % \end{theorem}

        % These properties ensure tighter bounds on the Cheeger constant, isoperimetric
        % number, and $\mu_2(G)$. Our working conjecture is that $\lambda_2(G) +
        % \lambda_2(G^c) \geq 2/\sqrt{n}$ for $G \in \conn_c(n)$.

\clearpage
\bibliographystyle{plain}
\bibliography{sources}

\begin{thebibliography}{10}

\bibitem{afshari2018algebraic}
B~Afshari, S~Akbari, MJ~Moghaddamzadeh, and B~Mohar.
\newblock The algebraic connectivity of a graph and its complement.
\newblock {\em Linear Algebra and its Applications}, 555:157--162, 2018.

\bibitem{aksoy2018maximum}
Sinan~G Aksoy, Fan Chung, Michael Tait, and Josh Tobin.
\newblock The maximum relaxation time of a random walk.
\newblock {\em Advances in Applied Mathematics}, 101:1--14, 2018.

\bibitem{aouchiche2013survey}
Mustapha Aouchiche and Pierre Hansen.
\newblock A survey of {N}ordhaus--{G}addum type relations.
\newblock {\em Discrete Applied Mathematics}, 161(4-5):466--546, 2013.

\bibitem{ashraf2014nordhaus}
F~Ashraf and Behruz Tayfeh-Rezaie.
\newblock Nordhaus--{G}addum type inequalities for {L}aplacian and signless
  {L}aplacian eigenvalues.
\newblock {\em arXiv preprint arXiv:1402.2995}, 2014.

\bibitem{bao2009laplacian}
Yan-Hong Bao, Ying-Ying Tan, and Yi-Zheng Fan.
\newblock The {L}aplacian spread of unicyclic graphs.
\newblock {\em Applied Mathematics Letters}, 22(7):1011--1015, 2009.

\bibitem{barrett2022new}
Wayne Barrett, Emily Evans, H~Tracy Hall, and Mark Kempton.
\newblock New conjectures on algebraic connectivity and the {L}aplacian spread
  of graphs.
\newblock {\em Linear Algebra and its Applications}, 648:104--132, 2022.

\bibitem{chen2016some}
Xiaodan Chen and Kinkar~Ch Das.
\newblock Some results on the {L}aplacian spread of a graph.
\newblock {\em Linear Algebra and its Applications}, 505:245--260, 2016.

\bibitem{chen2009laplacian}
Yanqing Chen and Ligong Wang.
\newblock The {L}aplacian spread of tricyclic graphs.
\newblock {\em The Electronic Journal of Combinatorics}, pages R80--R80, 2009.

\bibitem{chung1997spectral}
Fan~RK Chung.
\newblock {\em Spectral graph theory}, volume~92.
\newblock American Mathematical Soc., 1997.

\bibitem{einollahzadeh2021lower}
Mostafa Einollahzadeh and Mohammad~Mahdi Karkhaneei.
\newblock On the lower bound of the sum of the algebraic connectivity of a
  graph and its complement.
\newblock {\em Journal of Combinatorial Theory, Series B}, 151:235--249, 2021.

\bibitem{yi2010laplacian}
Yi~Zheng Fan, Shuang~Dong Li, and Ying~Ying Tan.
\newblock The {L}aplacian spread of bicyclic graphs.
\newblock {\em J. Math. Res. Exposition}, 30(1):17--28, 2010.

\bibitem{fan2008laplacian}
Yi-Zheng Fan, Jing Xu, Yi~Wang, and Dong Liang.
\newblock The {L}aplacian spread of a tree.
\newblock {\em Discrete Mathematics and Theoretical Computer Science},
  10(1):79--86, 2008.

\bibitem{liu2010laplacian}
Ying Liu.
\newblock The {L}aplacian spread of cactuses.
\newblock {\em Discrete Mathematics \& Theoretical Computer Science}, 12, 2010.

\bibitem{nordhaus1956complementary}
Edward~A Nordhaus and Jerry~W Gaddum.
\newblock On complementary graphs.
\newblock {\em The American Mathematical Monthly}, 63(3):175--177, 1956.

\bibitem{xu2011laplacian}
Ying Xu and Jixiang Meng.
\newblock The {L}aplacian spread of quasi-tree graphs.
\newblock {\em Linear Algebra and its Applications}, 435(1):60--66, 2011.

\bibitem{you2012laplacian}
Zhifu You and Bolian Liu.
\newblock The {L}aplacian spread of graphs.
\newblock {\em Czechoslovak Mathematical Journal}, 62(1):155--168, 2012.

\bibitem{zhai2011laplacian}
Mingqing Zhai, Jinlong Shu, and Yuan Hong.
\newblock On the {L}aplacian spread of graphs.
\newblock {\em Applied Mathematics Letters}, 24(12):2097--2101, 2011.

\end{thebibliography}

\end{document}